\newcommand{\di}{\, \mathrm{d}}
\newcommand{\R}{\mathbb{R}}
\newcommand{\cK}{\mathcal{K}}
\newcommand{\N}{\mathbb{N}}
\newcommand{\CLe}{C_{\text{Lep}}}
\newcommand{\ind}{\mathbbm{1}}
\newtheorem*{definition}{Definition}
\newcommand{\sumdel}{\sum_{j,n,\delta}\,}
\DeclareMathOperator{\E}{\mathbb{E}}
	\DeclareMathOperator{\Pb}{\mathbb{P}}
\DeclareMathOperator{\essup}{essup}
\numberwithin{equation}{section}
\theoremstyle{plain}
\newtheorem{thm}{Theorem}[section]
\newtheorem{prop}{Proposition}[section]
\newtheorem{lem}{Lemma}[section]
\newtheorem{ass}{Assumption}
\begin{document}

\begin{frontmatter}
\title{Rate-optimal nonparametric estimation for random coefficient regression models}
\runtitle{Random Coefficients}
%\thankstext{T1}{Footnote to the title with the ``thankstext'' command.}

\begin{aug}
\author{\fnms{Hajo} \snm{Holzmann}\ead[label=e1]{holzmann@mathematik.uni-marburg.de}}%\thanksref{m1}
\and
\author{\fnms{Alexander} \snm{Meister}%\thanksref{m2}
\ead[label=e2]{alexander.meister@uni-rostock.de}}
%\ead[label=u1,url]{http://www.foo.com}}

%\thankstext{t2}{First supporter of the project}
%\thankstext{t3}{Second supporter of the project}
\runauthor{H. Holzmann and A. Meister}

\affiliation{Philipps-Universit\"at Marburg\thanksmark{m1}}
\affiliation{Universit\"at Rostock\thanksmark{m2}}

\address{Hajo Holzmann \\ Fachbereich Mathematik und Informatik, \\ Philipps-Universit\"at Marburg,\\ 35037 Marburg, Germany. \\
\printead{e1}}

\address{Alexander Meister \\ Institut f\"ur Mathematik, \\ Universit\"at Rostock, \\ 18051 Rostock, Germany. \\
\printead{e2}}

\end{aug}

\begin{abstract}
Random coefficient regression models are a popular tool for analyzing unobserved heterogeneity, and have seen renewed interest in the recent econometric literature. In this paper we obtain the optimal pointwise convergence rate for estimating the density in the linear random coefficient model over H\"older smoothness classes, and in particular show how the tail behavior of the design density impacts this rate. In contrast to previous suggestions, the estimator that we propose and that achieves the optimal convergence rate does not require dividing by a nonparametric density estimate. The optimal choice of the tuning parameters in the estimator depends on the tail parameter of the design density and on the smoothness level of the H\"older class, and we also study adaptive estimation with respect to both parameters.
\end{abstract}

\begin{keyword}[class=MSC]
\kwd{62G07}
\kwd{62G20}
\kwd{62G30}
\end{keyword}

\begin{keyword}
\kwd{adaptive estimation}
\kwd{ill-posed inverse problem}
\kwd{minimax risk}
\kwd{nonparametric estimation.}
\end{keyword}

\end{frontmatter}

\section{Introduction}
%%%%%%%%%%%%%%%%%%%%%%%%

In this paper we consider the linear random coefficient regression model, in which i.i.d.~(independent and identically distributed) data $(X_j,Y_j)$, $j=1,\ldots,n$ are observed according to 
\begin{equation} \label{eq:model}
Y_j \, = \, A_{0,j} + A_{1,j} X_j\,.
\end{equation}
Therein $A_j := (A_{0,j},A_{1,j})$ are unobserved i.i.d.~random variables with the bivariate Lebesgue density $f_A$; while $A_j$ and $X_j$ are independent. Note that (\ref{eq:model}) represents a randomized extension of the standard linear regression model.  
We shall derive the optimal convergence rates for estimating $f_A$ over H\"older smoothness classes in case when the $X_j$ have a Lebesgue density $f_X$ with polynomial tail behaviour, as specified in Assumption \ref{assump::designdens} below.

From a parametric point of view with focus on means and variances of the random coefficients, a multivariate version of model (\ref{eq:model}) is studied by \cite{HH68}. They assume the coefficients $A_j$ to be mutually independent. The nonparametric analysis of model (\ref{eq:model}) has been initiated by  \cite{BH92} and \cite{BM94}. \cite{BFH96} use Fourier methods to construct an estimator of $f_A$. They do not derive the optimal convergence rate, though. Furthermore, their estimator is rather involved as it requires a nonparametric estimator of a conditional characteristic function, which is then plugged into a regularized Fourier inversion.   

Extensions of model (\ref{eq:model}) have seen renewed interest in the econometrics literature in recent years. \cite{HKM10} suggest a nonparametric estimator in a multivariate version of model (\ref{eq:model}). They only obtain its convergence rate for very heavy tailed regressors. Moreover, their estimator requires dividing by a nonparametric density estimator for a transformed version of the regressors. This involves an additional smoothing step, and potentially renders the estimator unstable. \cite{BH18} propose a specification test for model (\ref{eq:model}) against a general nonseparable model as the alternative, while \cite{DEPSH17} suggest multiscale tests for qualitative hypotheses on $f_A$. 
Extensions and modifications of model (\ref{eq:model}) are studied in \cite{GK13}, \cite{LP17}, \cite{AB11}, \cite{GH11}, \cite{GL18}, \cite{M17}, \cite{MT13} and \cite{HHM17}. Methods of analytic continuation of the coefficients density outside the support of the covariates are considered  under more restrictive conditions in \cite{HHM17} and in the recent work of \cite{GG19}.

In this paper, we consider the basic model (\ref{eq:model}) under the following condition.
\begin{ass}[Design density]\label{assump::designdens}
	For some constants $\beta>0$ and $C_X > c_X > 0$, the density $f_X$ satisfies
	\begin{equation} \label{eq:decayX}
	C_X (1 + |x|)^{-\beta-2} \geq f_X(x) \geq c_X\cdot (1 + |x|)^{-\beta-2}\,, \qquad \forall x \in \mathbb{R}\,,
	\end{equation}
\end{ass}
We analyze precisely how the tail parameter $\beta$ of $f_X$ influences the optimal rate of convergence of $f_A$ at a given point $a \in \R^2$ in a minimax sense in case $\beta >1$ . 
Note that the heavy tailed setting which is studied in \cite{HKM10} corresponds to $\beta = 0$ in Assumption \ref{assump::designdens}. To our best knowledge a rigorous study of the minimax convergence rate in the more realistic case of $\beta>1$ has been missing so far. Indeed we fill this gap and derive optimal rates, which are fundamentally new and not known from any other nonparametric estimation problem. 

The estimator which we propose is inspired by \cite{HHM17}. It achieves the optimal convergence rate and does not require dividing by a nonparametric density estimator. Instead we exploit the order statistic of the transformed design variables in a Priestley-Chao manner. The optimal choice of the tuning parameters depends both on the two parameters $\beta$ and on the smoothness parameter of the H\"older class, which is reminiscent of the estimation problem in \cite{JMR14} and in contrast to usual adaptation problems in nonparametric curve estimation, in which the smoothing parameters shall adapt only to an unknown smoothness level. Here we show how to make the estimator adaptive with respect to both of these parameters. 

The paper is organized as follows. In Section \ref{sec:estimator} we introduce our estimation procedure. Section \ref{sec:bounds} is devoted to upper and lower risk bounds, which yield minimax rate optimality for the pointwise risk. We also derive an upper risk bound for the uniform risk, here, an additional logarithmic factor occurs. In Section \ref{sec:adapt} we deal with adaptivity. The proofs and technical lemmas are deferred to Section \ref{sec:proofs}.   

Let us fix some notation: $\psi_A$ denotes the characteristic function of the $A_j$, while $\psi_{U|Z}$ is the conditional characteristic function of the random variable $U$ given the random variable $Z$. Throughout $|\cdot|$ stands for the Euclidean norm of a real or complex vector, and $\ind(A)$ denotes the indicator function of the event $A$. For positive sequences $(a_n)$ and $(b_n)$ we write $a_n \asymp b_n$ if $c \, a_n \leq b_n \leq C \, a_n$, $n \in \N$ for constants $0 < c < C$.

\section{The estimator}\label{sec:estimator}
%%%%%%%%%%%%%%%%%%%%%%%%%%%%%%%%%%%%%%%%%%%%%%

In order to construct an estimator for $f_A$ in model (\ref{eq:model}), 
we transform the data $(X_j,Y_j)$ into $(Z_j,U_j)$ via
\[ U_j = Y_j/\sqrt{1+X_j^2}\,, \qquad \big(\cos Z_j, \sin Z_j \big) = (1,X_j)/\sqrt{1+X_j^2}\,,\]
so that $Z_j \in (- \pi/2, \pi/2)$ almost surely (a.s.), $Z_j$ and $A_j$ are independent, and
\begin{equation}\label{eq:normalizedmodel}
 U_j = A_{0,j}\, \cos Z_j + A_{1,j}\, \sin Z_j\,.
\end{equation}
Then the conditional characteristic function $\psi_{U|Z}(\cdot | z)$ of $U_j$ given $Z_j=z$ equals 
\begin{equation}\label{eq:relcharafct}
 \psi_{U|Z}(t|z) = \psi_A\big(t \cos z, t \sin z \big)\,. 
\end{equation}
By Fourier inversion, integral substitution into polar coordinates (with signed radius) and (\ref{eq:relcharafct}) we deduce that  
\begin{align}\label{eq:identifiedrelation} 
f_A(a) & = \frac{1}{(2 \pi)^2}\, \iint \exp\big( - i a'b \big)\, \psi_A (b)\, \di b \notag \\  
%& = \frac{1}{(2 \pi)^2}\, \int_\R\, \int_{- \pi/2}^{\pi/2} |t|\, \exp\big( - i t (a_0 \cos z + a_1 \sin z) \big)\, \psi_A \big(t \cos z , t \sin z\big)\, \di z\, \di t \\  
& = \frac{1}{(2 \pi)^2}\, \int_\R\, \int_{- \pi/2}^{\pi/2} |t|\, \exp\big( - i t (a_0 \cos z + a_1 \sin z) \big)\, \psi_{U|Z} \big(t|z\big)\, \di z\, \di t\,.
\end{align}
%
%\section{Regularization bias}
%
The equation (\ref{eq:identifiedrelation}) motivates us to estimate $f_A$ by an empirical version of the conditional characteristic function $\psi_{U|Z}$ which is directly accessible from the data $(Z_j,U_j)$. 
For that purpose choose a function $w$ which satisfies the following assumption.  
\begin{ass}[Kernel]\label{assum:kernel}
	For a number $\ell \in \N_0$ the function $w: \R \to \R$ is even, supported on $[-1,1]$, $(\ell+1)$-fold continuously differentiable on the whole real line, satisfies $w(0)=1$ as well as $w^{(k)}(0) = 0$ for all $k=1,\ldots,\ell$, and $|w|$ is bounded by $1$.	
\end{ass}
Assumption \ref{assum:kernel} could be relaxed somewhat. In particular, we may assume compact support instead of imposing the support of $w$ to be a subset of $[-1,1]$ and we may remove the condition that $|w|$ is bounded by $1$. Simple boundedness is sufficient, which follows from the other conditions.

Now we consider the regularized version of $f_A$ by kernel smoothing as follows
\begin{align} \nonumber 
& \tilde f_A(a; h) \\ \nonumber & = \frac{1}{(2 \pi)^2} \int_\R \int_{- \pi/2}^{\pi/2} w(th) |t| \exp\big( - i t (a_0 \cos z + a_1 \sin z) \big) \psi_{U|Z} \big(t|z\big) \di z \di t \\ \label{eq:regexpress}
& = \int_{- \pi/2}^{\pi/2} \, \int_\R K\big( u - a_0 \cos z - a_1 \sin z;h\big)\, f_{U|Z}(u|z)\, \di u\, \di z\,, 
\end{align}
where 
\begin{align}\label{eq:resulkernel}
K\big(x;h \big):=  \frac{1}{(2 \pi)^2}\, \int_\R\,  w(th)\, |t|\, \exp( i t x)\, \di t
=  \frac{2}{(2 \pi)^2}\, \int_0^\infty\,  w(th)\, t\, \cos(t x)\, \di t.
\end{align}
Inspired by (\ref{eq:regexpress}) we introduce a Priestley-Chao type estimator of the density $f_A$, 
\begin{align} \nonumber
\hat f_A(a; h, \delta) & = \sum_{j=1}^{n-1}\, K\big( U_{[j]} - a_0 \cos Z_{(j)} - a_1 \sin Z_{(j)};h\big)\,\big(Z_{(j+1)} - Z_{(j)} \big) \\ \nonumber & \hspace{5cm} \cdot \ind(-\pi/2 + \delta \leq Z_{(j)} \leq Z_{(j+1)} \leq \pi/2 - \delta)\\  \nonumber 
& = \frac{1}{(2 \pi)^2}\, \int_\R\, w(th)\, |t|\, \sum_{j=1}^{n-1}\, \exp\big(  i t \big(U_{[j]} - a_0 \cos Z_{(j)} - a_1 \sin Z_{(j)}\big) \big)\\ \label{eq:est}
& \hspace{2cm}\cdot \big(Z_{(j+1)} - Z_{(j)} \big)  \,\ind(-\pi/2 + \delta \leq Z_{(j)} \leq Z_{(j+1)} \leq \pi/2 - \delta)\,\di t,
\end{align}
where $(U_{[j]}, Z_{(j)})$, $j=1, \ldots, n$, denotes the sample $(U_{j}, Z_{j})$, $j=1, \ldots, n$, sorted such that $Z_{(1)} \leq \ldots \leq Z_{(n)}$, and where $h = h_n>0$ is a classical bandwidth parameter and $\delta = \delta_n \geq 0$ is a threshold parameter both of which remain to be selected. By the parameter $\delta$ we cut off that subset of the interval $[-\pi/2,\pi/2]$ in which the $Z_j$ are sparse.  

In the following we shall use the symbol
\begin{equation}\label{eq:sumshortnot}
\sumdel := \sum_{j \in \{ 1, \ldots, n\},  -\pi/2 + \delta \leq Z_{(j)} \leq Z_{(j+1)}\leq \pi/2 - \delta}
\end{equation}
to denote the sum over the random set of indices $1 \leq j \leq n-1$ for which $-\pi/2 + \delta \leq Z_{(j)} \leq Z_{(j+1)}\leq \pi/2 - \delta$. Thus, we may write the estimator in \eqref{eq:est} as
\[ \hat f_A(a; h, \delta)  = \sumdel\, K\big( U_{[j]} - a_0 \cos Z_{(j)} - a_1 \sin Z_{(j)};h\big)\,\big(Z_{(j+1)} - Z_{(j)} \big).\]

In this paper we consider one-dimensional covariates $Z_1,\ldots,Z_n$ only. From a methodological point of view, the estimator (\ref{eq:est}) could be extended to the multivariate setting by using Voronoi cells instead of the order statistics. A similar technique is proposed in eq. (36) in \cite{HHM17}. On the other hand, the asymptotic properties of such an estimator might be completely different from the univariate case.

\section{Upper and lower risk bounds}\label{sec:bounds}
%%%%%%%%%%%%%%%%%%%%%%%%%%%%%%%%%%%%%%%%%%%%%%%%%%%%%%%%%%%%

We consider the following H\"older smoothness class of densities. 
\begin{definition}

 For a point $a = (a_0,a_1) \in \mathbb{R}^2$, a smoothness index $\alpha>0$ and constants $c_A, c_B, r_A, c_M>0$ define the class ${\cal F} = {\cal F}(a,\alpha,c_A,c_B,r_A,c_M)$ of densities as follows: $f_A \in {\cal F}(a,\alpha,c_A,c_B,r_A,c_M)$ is H\"older-smooth of the degree $\alpha$ in the neighborhood $U_{r_A}(a) = \{b \in \R^2 \mid |a-b| < r_A\}$, that is, $f_A$ is $s= \lfloor \alpha \rfloor = \max\{ k \in \N_0 \mid k < \alpha\}$-times continuously differentiable in $U_{r_A}(a)$ and its partial derivatives satisfy
 {\small
\begin{equation} \label{eq:Hoelder}
\Big|\frac{\partial^s f_A}{\partial x^k \partial y^{s-k}}(x,y) - \frac{\partial^s f_A}{\partial x^k \partial y^{s-k}}(a_0,a_1)\Big| \, \leq \, c_A \cdot \big|(x,y) - a\big|^{\alpha-s}\,,   
\end{equation}
}
for all $k=0,\ldots,s$ and $(x,y)\in U_{r_A}(a)$. Furthermore, assume that the Fourier transform $\psi_A$ of $f_A$ is weakly differentiable and its weak derivative $\nabla \psi_A$ satisfies  
\begin{equation} \label{eq:Lipschitz}
\int \underset{y \in \R}{\essup} \big|\nabla\psi_A(x,y)\big| \di x\,  \leq \, c_B\,,
\end{equation}
and that $f_A(a) \leq c_M$ for all $a \in \mathbb{R}^2$. 
\end{definition}

For the proof of the first theorem, the global partial tail and smoothness condition (\ref{eq:Lipschitz}) of the order $1$ is required in addition to the local smoothness assumption (\ref{eq:Hoelder}) of the order $\alpha$. The theorem provides an upper bound on the convergence rate for the estimator in (\ref{eq:est}). 	

\begin{thm} \label{T:1}
	Consider model (\ref{eq:model}) and assume that $f_X$ satisfies (\ref{eq:decayX}) for some $\beta>1$. If $w$ satisfies Assumption \ref{assum:kernel} for $l \geq 2 \, \lfloor \alpha \rfloor$, and if $\delta = \delta_n$ and $h = h_n$ are chosen such that 
	\[ \delta \asymp n^{-\frac1{\beta+1}}, \quad \text{ and }\quad h \asymp n^{-\frac1{(\alpha+2)(\beta+1)}},\]
	 then the estimator (\ref{eq:est}) attains the following asymptotic risk upper bound over the function class ${\cal F} = {\cal F}(a,\alpha,c_A,c_B,r_A,c_M)$, 
	$$ \sup_{f_A \in {\cal F}} \E_{f_A} \big[\big|\hat{f}_A(a;h,\delta) - f_A(a)\big|^2\big] \, = \, {\cal O}\big(n^{-\frac{2\, \alpha}{(\alpha+2)(\beta+1)}}\big)\,. $$
\end{thm}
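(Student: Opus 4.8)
\emph{Proof sketch.} The plan is a bias--variance decomposition carried out conditionally on the design. Write $\hat f:=\hat f_A(a;h,\delta)$, $\tilde f:=\tilde f_A(a;h)$ and $\mathcal Z:=\sigma(Z_1,\dots,Z_n)$, and start from
\[
\E_{f_A}\big|\hat f-f_A(a)\big|^2\le 3\,\E\big|\hat f-\E[\hat f\mid\mathcal Z]\big|^2+3\,\E\big|\E[\hat f\mid\mathcal Z]-\tilde f\big|^2+3\,\big|\tilde f-f_A(a)\big|^2 .
\]
Conditionally on $\mathcal Z$ the $U_{[j]}$ are independent with $U_{[j]}\mid\mathcal Z\sim f_{U|Z}(\cdot\mid Z_{(j)})$ and the spacings $Z_{(j+1)}-Z_{(j)}$ are $\mathcal Z$-measurable, so $\E[\hat f\mid\mathcal Z]=\sumdel G(Z_{(j)})(Z_{(j+1)}-Z_{(j)})$ with $G(z):=\int K(u-a_0\cos z-a_1\sin z;h)\,f_{U|Z}(u\mid z)\di u$, and by (\ref{eq:regexpress}) $\tilde f=\int_{-\pi/2}^{\pi/2}G(z)\di z$. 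Thus the three summands are a stochastic term, a quadrature-plus-truncation term, and a deterministic smoothing bias; the choices $\delta\asymp n^{-1/(\beta+1)}$, $h\asymp n^{-1/((\alpha+2)(\beta+1))}$ will make the last one dominant.

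\emph{Smoothing bias.} I would show $|\tilde f-f_A(a)|=O(h^\alpha)$ by Fourier inversion: $\tilde f-f_A(a)=(2\pi)^{-2}\int_{\R^2}(w(h|b|)-1)e^{-ia'b}\psi_A(b)\di b=(W_h*f_A)(a)-f_A(a)$, where $W_h(y)=h^{-2}W_1(y/h)$ is the radially symmetric kernel with Fourier transform $b\mapsto w(h|b|)$. By Assumption \ref{assum:kernel}, $W_1$ integrates to one, has vanishing moments through order $\ell$, and decays polynomially; passing from the one-dimensional $w$ to the two-dimensional radial profile $w(|\cdot|)$ roughly halves the available smoothness and hence the decay of $W_1$, which is why the hypothesis is stated as $\ell\ge2\lfloor\alpha\rfloor$. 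Splitting $(W_h*f_A-f_A)(a)$ at $|y|=r_A$, on the inner part I would Taylor-expand $f_A$ at $a$ (licit since $|y|<r_A$), cancel the degree-$1,\dots,\lfloor\alpha\rfloor$ terms against the vanishing moments (the complementary tails of the moment integrals being absorbed by the decay of $W_h$), and bound the remainder by $c_A\int|W_h(y)|\,|y|^\alpha\di y\asymp h^\alpha$ via (\ref{eq:Hoelder}); the outer part is controlled by the decay of $W_h$, by $f_A(a)\le c_M$, $\int f_A=1$, and the global tail condition (\ref{eq:Lipschitz}). I expect this step to be the main obstacle: obtaining the full rate $h^\alpha$ from a merely \emph{local} smoothness hypothesis forces the moment/tail bookkeeping to be intertwined with (\ref{eq:Lipschitz}), and it is precisely this bookkeeping for the radial kernel that fixes the condition on $\ell$.

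\emph{Stochastic and quadrature terms.} Conditionally on $\mathcal Z$ the estimator is a sum of independent terms, so $\E|\hat f-\E[\hat f\mid\mathcal Z]|^2=\E\sumdel\var\big(K(U_{[j]}-\cdots;h)\mid Z_{(j)}\big)(Z_{(j+1)}-Z_{(j)})^2$; a substitution shows each conditional variance is $\le\|f_{U|Z}(\cdot\mid z)\|_\infty\,h^{-3}\int K(v;1)^2\di v=O(h^{-3})$, using that $t\mapsto|t|w(t)$ has only a corner at the origin (so $K(\cdot;1)$ decays like $|v|^{-2}$ and lies in $L^2$) together with a uniform-in-$z$ bound on $\|f_{U|Z}(\cdot\mid z)\|_\infty$ from a technical lemma. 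Since $Z_j=\arctan X_j$ has density $f_Z(z)=f_X(\tan z)\sec^2 z\asymp(\pi/2-|z|)^{\beta}$ near $\pm\pi/2$, it satisfies $f_Z\gtrsim\delta^{\beta}$ on $I_\delta:=[-\pi/2+\delta,\pi/2-\delta]$, and a standard order-statistics computation (through the probability integral transform) gives $\E\sumdel(Z_{(j+1)}-Z_{(j)})^2\asymp n^{-1}\int_{I_\delta}f_Z(z)^{-1}\di z\asymp n^{-1}\delta^{1-\beta}$, the integral being endpoint-dominated because $\beta>1$. So the stochastic term is $O(h^{-3}\delta^{1-\beta}n^{-1})$. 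For the quadrature term I would write $\E[\hat f\mid\mathcal Z]-\tilde f$ as a Riemann-sum error over $I_\delta$ plus $-\int_{[-\pi/2,\pi/2]\setminus I_\delta}G$; as $G$ is Lipschitz with constant $O(h^{-1})$ and $\|G\|_\infty=O(h^{-1})$ (again from a lemma bounding $f_{U|Z}$ and its first derivatives via (\ref{eq:Lipschitz})), this is $\lesssim h^{-1}(\sumdel(Z_{(j+1)}-Z_{(j)})^2+\delta+M)$ with $M$ the largest spacing in $I_\delta$. On the event $\{M\le Cn^{-1}\delta^{-\beta}\log n\}$, which has probability $1-O(n^{-q})$ for every $q$, squaring and taking expectations yields $O(h^{-2}\delta^{2}+(\log n)^2h^{-2}(n^{-1}\delta^{1-\beta})^2)$; on its complement the deterministic bound $|\hat f-f_A(a)|=O(h^{-2})$ renders the contribution negligible.

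\emph{Conclusion.} Collecting the estimates, uniformly over $\mathcal F$,
\[
\E_{f_A}\big|\hat f-f_A(a)\big|^2 = O\!\Big(h^{2\alpha}+h^{-3}\delta^{1-\beta}n^{-1}+h^{-2}\delta^{2}+(\log n)^2 h^{-2}\big(n^{-1}\delta^{1-\beta}\big)^2\Big),
\]
and inserting $\delta\asymp n^{-1/(\beta+1)}$, $h\asymp n^{-1/((\alpha+2)(\beta+1))}$, a short computation of exponents shows the first term is $\asymp n^{-2\alpha/((\alpha+2)(\beta+1))}$ while every other term is of strictly smaller polynomial order (absorbing the logarithmic factors), which gives the assertion.
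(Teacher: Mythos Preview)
Your overall architecture---conditional bias--variance decomposition given $\mathcal Z$, control of the spacings via the density lower bound $f_Z(z)\gtrsim(\pi/2-|z|)^\beta$, and the regularization bias $O(h^\alpha)$---matches the paper's. However, two of your intermediate estimates are unjustified under the hypotheses, and both stem from the same tacit assumption: that $\sup_z\|f_{U|Z}(\cdot\mid z)\|_\infty<\infty$.

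Your stochastic bound $\var\big(K(U_{[j]}-\cdots;h)\mid Z_{(j)}\big)\le\|f_{U|Z}(\cdot\mid z)\|_\infty\int K(v;h)^2\di v=O(h^{-3})$, as well as your claims $\|G\|_\infty=O(h^{-1})$ and $\mathrm{Lip}(G)=O(h^{-1})$, all rest on a uniform pointwise bound for $f_{U|Z}$. But $\mathcal F$ only imposes $f_A\le c_M$, \emph{local} H\"older smoothness at $a$, and the gradient condition \eqref{eq:Lipschitz} on $\psi_A$; none of these forces the one-dimensional projections $u\mapsto f_{U|Z}(u\mid z)$ to be bounded (the paper in fact remarks that the sharper $h^{-3}$ variance bound is available only under the additional assumption of bounded support for $f_A$). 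Without it one must use $\|K(\cdot;h)\|_\infty^2=O(h^{-4})$, giving $\var(\cdot\mid Z_{(j)})=O(h^{-4})$. With this correction the stochastic term becomes $O(h^{-4}n^{-1}\delta^{1-\beta})$, which for the stated $h,\delta$ is exactly of order $n^{-2\alpha/((\alpha+2)(\beta+1))}$: it is \emph{not} strictly smaller than the bias, contrary to your concluding sentence.

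For the quadrature term the paper does not attempt a pointwise Lipschitz bound on $G$ (even with bounded $f_{U|Z}$ the $K'$-contribution alone would already be of order $h^{-2}$, not $h^{-1}$). Instead it stays in the Fourier representation $G(z)=(2\pi)^{-2}\int w(th)\,|t|\,e^{-it(a_0\cos z+a_1\sin z)}\psi_A(t\cos z,t\sin z)\,\di t$ and splits the increment into a piece bounded by $|a|\,|t|\,(Z_{(j+1)}-Z_{(j)})$ and the piece $|\psi_A(t\cos Z_{(j)},t\sin Z_{(j)})-\psi_A(t\cos z,t\sin z)|$. The latter is controlled via \eqref{eq:Lipschitz} only after integrating in $t$ and substituting $s=t\cos u$, which introduces a factor $1/\cos u$; squaring and applying Cauchy--Schwarz over $[-\pi/2+\delta,\pi/2-\delta]$ then produces $\int 1/\cos^2\asymp\delta^{-1}$. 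This yields the bound $\text{const}\cdot(\delta^{-1}h^{-4}+|a|^2h^{-6})\sum_{j,n,\delta}(Z_{(j+1)}-Z_{(j)})^3$ for the squared Riemann-sum error, together with the truncation contribution $h^{-4}\big[(L_n(\delta)+\pi/2)^2+(\pi/2-R_n(\delta))^2\big]$. Your deferral to ``a lemma bounding $f_{U|Z}$ and its first derivatives via \eqref{eq:Lipschitz}'' does not exist in the paper and, as above, such pointwise bounds are not implied by $\mathcal F$.

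With the corrected powers of $h$, several terms land exactly at the target rate rather than strictly below it; the theorem still follows, but not by the margins you claim.
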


The following theorem yields that the convergence rates which our estimator (\ref{eq:est}) achieves according to Theorem \ref{T:1} are optimal for the pointwise risk in the minimax sense.

\begin{thm} \label{T:low}
	Fix $a=0$ and the constants $c_A$, $c_B$ sufficiently large for any $\alpha>0$ and $\beta>1$. Let $(\hat{f}_n)_n$ be an arbitrary sequence of estimators of $f_A$, where $\hat{f}_n$ is based on the data $(X_j,Y_j)$, $j=1,\ldots,n$, for each $n$. Assume that $f_X$ satisfies (\ref{eq:decayX}). Then 
	$$ \liminf_{n\to\infty}\, n^{\frac{2\, \alpha}{(\alpha+2)(\beta+1)}} \, \sup_{f_{A} \in {\cal F}} \E_{f_A} \big[\big|\hat{f}_n(0) - f_A(0)\big|^2\big] \, > \, 0\,. $$
\end{thm}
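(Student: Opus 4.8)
The plan is to use the standard two-point (or multi-point) reduction scheme for minimax lower bounds. First I would construct a family of densities $f_A \in \mathcal F$ that are statistically hard to distinguish from a fixed reference density $f_0$ while differing at the point $a=0$ by an amount of order $n^{-\alpha/((\alpha+2)(\beta+1))}$. Concretely, I would take $f_0$ to be a fixed smooth density on $\R^2$ (say a suitable Gaussian-type or compactly modified density satisfying all the constraints defining $\mathcal F$ with room to spare, which is why $c_A,c_B$ are taken ``sufficiently large''), and perturb it by a bump: $f_{A,\theta}(b) = f_0(b) + \theta\, \gamma\, g\big(b/\gamma\big)$ for a fixed smooth, compactly supported, mean-zero-integral function $g$ with $g(0)\neq 0$, a scaling parameter $\gamma = \gamma_n \to 0$, a sign/amplitude $\theta$, and a normalizing power of $\gamma$ chosen so that $f_{A,\theta}$ still integrates to one, stays nonnegative, and stays in the Hölder ball: the Hölder condition \eqref{eq:Hoelder} forces the amplitude of the perturbation to scale like $\gamma^{\alpha}$ (after accounting for the $\gamma$-rescaling of derivatives), so $|f_{A,\theta}(0) - f_0(0)| \asymp \gamma^\alpha$. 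I also need to check that the Fourier-side condition \eqref{eq:Lipschitz} is preserved: since $g$ is fixed and smooth with compact support, $\nabla\psi_{g(\cdot/\gamma)}$ rescales controllably, and the extra power of $\gamma$ in the amplitude will make this term asymptotically negligible against the slack in $c_B$.

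Second, I would bound the statistical distance between the two resulting distributions of the full data $(X_j,Y_j)_{j=1}^n$. Because $X_j$ has the same (arbitrary, fixed) density $f_X$ under both hypotheses, and $Y_j = A_{0,j} + A_{1,j}X_j$, the conditional density of $Y_j$ given $X_j = x$ is the pushforward of $f_{A,\theta}$ under $(a_0,a_1)\mapsto a_0 + a_1 x$; equivalently, in the $(Z,U)$ coordinates of Section \ref{sec:estimator}, the conditional density of $U_j$ given $Z_j = z$ is the Radon-transform-type projection $\int f_{A,\theta}(u\cos z - v\sin z,\, u\sin z + v\cos z)\,dv$. I would compute the chi-square (or Hellinger/KL) divergence between the $\theta = +\varepsilon$ and $\theta = 0$ conditional laws, integrate over $z$ against $f_Z$, and multiply by $n$ for the product structure. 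The key quantitative input is how the projection of the $\gamma$-bump behaves: its $L^2$-mass is of order $\gamma^{\alpha}\cdot \gamma^{1/2}$-ish in each fibre, but — and this is the crucial point that produces the $\beta$-dependence — when $z$ is close to $\pm\pi/2$ (i.e.\ $|x|$ large), $f_Z(z)$ is small, of order $\delta_z^{\beta}$ where $\delta_z = \pi/2 - |z|$, because of the tail decay $(1+|x|)^{-\beta-2}$ in \eqref{eq:decayX} together with the Jacobian of the $x \mapsto z$ change of variables. So the chi-square contribution from the region $\delta_z \lesssim \gamma$ is where the hardness concentrates, and balancing $n \cdot (\text{divergence}) \asymp 1$ against $\gamma^\alpha$ for the separation yields exactly the exponent $\gamma \asymp n^{-1/((\alpha+2)(\beta+1))}$ and hence the claimed rate $n^{-2\alpha/((\alpha+2)(\beta+1))}$.

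Third, once the divergence is controlled — say $\mathrm{KL}\big(\Pr_{f_{A,\varepsilon}}^{\otimes n} \,\|\, \Pr_{f_0}^{\otimes n}\big) \leq c < \infty$ for the calibrated $\gamma_n$ — I would invoke the classical two-point lower bound (Le Cam's method; e.g.\ Tsybakov's Theorem 2.2): for any estimator $\hat f_n$,
\[
\max_{\theta \in \{0,\varepsilon\}} \E_{f_{A,\theta}}\big[|\hat f_n(0) - f_{A,\theta}(0)|^2\big] \;\geq\; \tfrac14\, |f_{A,\varepsilon}(0) - f_0(0)|^2 \cdot \big(1 - \mathrm{TV}\big)\;\gtrsim\; \gamma_n^{2\alpha}\;\asymp\; n^{-\frac{2\alpha}{(\alpha+2)(\beta+1)}},
\]
with the total variation bounded away from $1$ via Pinsker. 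Since both $f_0$ and $f_{A,\varepsilon}$ lie in $\mathcal F$ for $c_A,c_B$ large, this gives the stated $\liminf > 0$.

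The main obstacle I expect is the second step: getting a sharp upper bound on the divergence that correctly captures the fibre-wise decay of $f_Z$ near $\pm\pi/2$. One has to handle the projection (Radon transform along each angle $z$) of the localized bump carefully, because the effective smoothing and the vanishing design density interact, and one must confirm that the worst contribution really comes from the boundary layer $\pi/2 - |z| \asymp \gamma_n$ and produces the $(\beta+1)$ factor rather than, say, $\beta$ or $\beta+2$ — this is precisely the novel feature the authors emphasize. A secondary technical point is verifying \eqref{eq:Lipschitz} for the perturbed density uniformly, but that should follow routinely from the fixed smooth template and the slack in the constants.
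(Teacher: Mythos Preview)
There is a genuine gap in the construction. An \emph{isotropic} bump $c\,\gamma^\alpha g(b/\gamma)$ perturbs the Radon projection $f_{U|Z}(\cdot\mid z)$ by $c\,\gamma^{\alpha+1}(Rg)(\cdot/\gamma,z)$, whose $L^2$-norm squared is of order $\gamma^{2\alpha+3}$ \emph{uniformly in the angle} $z$, since the Radon transform of a radially generic bump treats all directions the same way. Dividing by a reference density bounded below near $u=0$ and integrating against $f_Z$ then gives a per-observation $\chi^2$-divergence of exact order $\gamma^{2\alpha+3}$, with no dependence on $\beta$ whatsoever. Balancing $n\gamma^{2\alpha+3}\asymp 1$ yields $\gamma\asymp n^{-1/(2\alpha+3)}$ and the squared separation $\gamma^{2\alpha}\asymp n^{-2\alpha/(2\alpha+3)}$, which is the Radon-transform rate for $\beta=0$, not the claimed $n^{-2\alpha/((\alpha+2)(\beta+1))}$. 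Your intuition that ``the hardness concentrates near $\delta_z\lesssim\gamma$'' has the sign backwards: for a lower bound you need the \emph{information} (not the variance) to be small, and with an isotropic bump the sparse design near $|z|=\pi/2$ only makes those angles contribute \emph{less} to the divergence integral, without changing its order.

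What is missing is exactly the idea the paper supplies: the perturbation must be built so that $f_{U|Z}(\cdot\mid z)$ is \emph{unchanged} for all $z$ except those in the boundary layer where $f_Z$ is small. The paper achieves this by an anisotropic, Fourier-localized construction: it rescales by $\alpha_n\downarrow 0$ in $a_0$ and $\beta_n\uparrow\infty$ in $a_1$ (with $\alpha_n\asymp\beta_n^{-\alpha-1}$ to stay in the H\"older class) and modulates by $\cos(2\beta_n a_1)$, so that the Fourier transform of the perturbation is supported in $\{|x|\le\alpha_n,\ \beta_n\le|y|\le3\beta_n\}$. Since $\psi_{U|Z}(t\mid z)=\psi_A(t\cos z,t\sin z)$, the ray $t\mapsto(t\cos z,t\sin z)$ misses this support entirely unless $|\tan z|\ge\beta_n/\alpha_n$, i.e.\ unless $|X_j|\ge\beta_n/\alpha_n\asymp\beta_n^{\alpha+2}$. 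Only then does the tail assumption \eqref{eq:decayX} enter, giving $\Pr(|X_j|\ge\beta_n^{\alpha+2})\asymp\beta_n^{-(\alpha+2)(\beta+1)}$, and the balance $n\cdot\beta_n^{-(\alpha+2)(\beta+1)}\asymp1$ produces the correct exponent. Your single-scale bump cannot reproduce this directional Fourier localization; the two-scale, frequency-shifted alternative is the essential new ingredient.
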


The convergence rates from Theorem \ref{T:1} and \ref{T:low} differ significantly from standard rates in nonparametric estimation. While they become faster as $\alpha$ increases, they become slower as $\beta$ gets larger. It is remarkable that they do not approach the (squared) parametric rate $n^{-1}$ but the slower rate $n^{-2/(\beta+1)}$ for large $\alpha$.  

\textit{The case $\beta \leq 1$.}\quad 
	An analysis of the proof of Theorem \ref{T:1} shows that in case $\beta < 1$, choosing $\delta \asymp n^{-\frac1{\beta+1}}$ and $h \asymp n^{-\frac1{2\alpha+4}}$ gives the rate 
	$$ \sup_{f_A \in {\cal F}} \E_{f_A} \big[\big|\hat{f}_A(a;h, \delta) - f_A(a)\big|^2\big] \, = 
	{\cal O}\big(n^{-\frac{2 \alpha}{2\alpha+4}}\big); 
	$$
	in case $\beta =1$, an additional logarithmic factor occurs. The upper bound no longer depends on $\beta$ in this regime. For $\beta=0$, \cite{HKM10} obtain the faster rate ${\cal O}\big(n^{-\frac{2 \alpha}{2\alpha+3}}\big)$; their rate is in $\mathcal{L}_2$ but could be transferred to a pointwise rate. However, they additionally impose the assumption that the density $f_A$ is uniformly bounded with a bounded support. This implies that $f_{U|Z}$ is also uniformly bounded. Under this additional assumption, instead of (\ref{eq:variancebound}) in our analysis, we have the sharper bound 
	\begin{align} \nonumber 
	\text{Var}_{f_A}\big(\hat f_A(a; h, \delta)|\sigma_Z\big) %
	& \leq \mbox{const.}\cdot h^{-3}\cdot \sumdel\,\big(Z_{(j+1)} - Z_{(j)} \big)^2\, 
	\end{align}
	since
	$ \int_\R K^2\big( u ;h\big)\, \di u \leq \mbox{const.}\cdot h^{-3}.$ Then one can show that our estimator also achieves the rate ${\cal O}\big(n^{-\frac{2 \alpha}{2\alpha+3}}\big)$ for $\beta=0$, even with the choice $\delta = 0$. 
	
Finally, we consider the uniform rate of convergence, again in the case $\beta >1$. 
\begin{thm} \label{T:3}
	Consider model (\ref{eq:model}) and assume that $f_X$ satisfies (\ref{eq:decayX}) for some $\beta>1$. Suppose that $w$ satisfies Assumption \ref{assum:kernel} for $l \geq 2\, \lfloor \alpha \rfloor$, and that $\delta = \delta_n$ and $h = h_n$ are chosen such that 
	\[ \delta \asymp \Big(\frac{\log n}{n}\Big)^{\frac1{\beta+1}}, \quad \text{ and }\quad h \asymp \Big(\frac{\log n}{n}\Big)^{\frac1{(\alpha+2)(\beta+1)}}.\]
	For a compact rectangle $K \subseteq \R^2$ let ${\cal F}(K,\alpha,c_A,c_B,r_A,c_M)$ denote the class of densities on $\R^2$ such that $f \in {\cal F}(a,\alpha,c_A,c_B,r_A,c_M)$ for each $a \in K$. Then 
	 the estimator (\ref{eq:est}) attains the following uniform asymptotic risk upper bound over the function class ${\cal F} = {\cal F}(K,\alpha,c_A,c_B,r_A,c_M)$, 
	$$ \sup_{f_A \in {\cal F}} \E_{f_A} \big[\sup_{a \in K}\,\big|\hat{f}_A(a;h,\delta) - f_A(a)\big|^2\big] \, = \, {\cal O}\Big(\Big(\frac{\log n}{n}\Big)^{\frac{2\, \alpha}{(\alpha+2)(\beta+1)}}\Big)\,. $$
\end{thm}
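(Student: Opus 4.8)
The plan is to upgrade the pointwise bound of Theorem \ref{T:1} to a uniform bound by the standard device of (i) showing that the bias term is uniformly controlled over $a \in K$, and (ii) obtaining an exponential tail bound for the fluctuation term, valid uniformly over $a$, so that the extra $\log n$ factor in the rate of $\delta$ and $h$ pays for a union bound / chaining over the compact rectangle $K$. Write $\hat f_A(a;h,\delta) - f_A(a) = \big(\hat f_A(a;h,\delta) - \E_{f_A}[\hat f_A(a;h,\delta)\mid \sigma_Z]\big) + \big(\E_{f_A}[\hat f_A(a;h,\delta)\mid \sigma_Z] - f_A(a)\big)$, conditioning on the $\sigma$-field $\sigma_Z$ generated by $Z_1,\dots,Z_n$. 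The conditional-bias term is the same deterministic-type quantity analyzed in Theorem \ref{T:1}: the local Hölder condition \eqref{eq:Hoelder} (together with the moment conditions on $w$, $l \geq 2\lfloor\alpha\rfloor$) and the global condition \eqref{eq:Lipschitz} bound it by a constant times $h^{\alpha} + (\text{spacing terms depending on }\delta)$, and since the Hölder constants $c_A, c_B, r_A, c_M$ are uniform over $K$ by definition of ${\cal F}(K,\dots)$, this bound is automatically uniform in $a \in K$; plugging in $h \asymp (\log n / n)^{1/((\alpha+2)(\beta+1))}$ gives a bias of order $(\log n/n)^{\alpha/((\alpha+2)(\beta+1))}$ as required.

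The substantive part is the fluctuation term. Here I would fix a grid $G_n \subseteq K$ of cardinality polynomial in $n$ (spacing $n^{-c}$ for suitable $c$) and split $\sup_{a\in K}|\cdot| \leq \max_{a\in G_n}|\cdot| + \sup_{|a-a'|\le n^{-c}} |\text{increment}|$. The increment term is handled by a Lipschitz-in-$a$ estimate for $\hat f_A(\cdot;h,\delta)$: differentiating \eqref{eq:est} in $a$ brings down a factor $t$, so the Lipschitz constant of $a \mapsto \hat f_A(a;h,\delta)$ is $O(h^{-3})$ up to the spacing sum $\sumdel (Z_{(j+1)}-Z_{(j)})$, which is $O(1)$; choosing $c$ large enough makes this contribution negligible. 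For the maximum over the grid, condition on $\sigma_Z$; then $\hat f_A(a;h,\delta) - \E_{f_A}[\,\cdot\mid\sigma_Z]$ is a sum over $j$ of conditionally independent, bounded, mean-zero terms (the summand for index $j$ involves only $U_{[j]}$, which given $\sigma_Z$ is independent across $j$), each bounded in absolute value by $\|K(\cdot;h)\|_\infty (Z_{(j+1)}-Z_{(j)}) = O(h^{-2}\delta)$ on the event that the spacings are all $O(\delta)$, with conditional variance controlled by the bound \eqref{eq:variancebound} used in Theorem \ref{T:1}, namely $O(h^{-4}\delta \cdot \sumdel(Z_{(j+1)}-Z_{(j)})) = O(h^{-4}\delta)$. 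A Bernstein inequality then gives, for each fixed $a \in G_n$,
\[
\Pr\Big(\big|\hat f_A(a;h,\delta) - \E_{f_A}[\hat f_A(a;h,\delta)\mid\sigma_Z]\big| > \eta \,\Big|\, \sigma_Z\Big) \;\leq\; 2\exp\!\Big(-\,c\,\min\Big\{\frac{\eta^2}{h^{-4}\delta},\,\frac{\eta h^2}{\delta}\Big\}\Big),
\]
on a high-probability event for $\sigma_Z$ (on which the largest spacing among the retained indices is $O(\delta)$ and $\delta \asymp (\log n/n)^{1/(\beta+1)}$ is, up to constants, the true order of the spacings — this is exactly where the design condition \eqref{eq:decayX} with $\beta>1$ and the choice of $\delta$ enter, as in Theorem \ref{T:1}).

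Choosing $\eta \asymp (\log n/n)^{\alpha/((\alpha+2)(\beta+1))}$ and checking that with $h,\delta$ as in the statement one has $\eta^2/(h^{-4}\delta) \asymp \log n$ (the variance-type term; the subexponential term is of larger order, hence no obstruction), the right-hand side is $\leq 2 n^{-A}$ for an arbitrarily large $A$ by tuning constants; a union bound over the $\mathrm{poly}(n)$ grid points of $G_n$ keeps this summable, and integrating the resulting tail (together with a crude deterministic bound $|\hat f_A| = O(h^{-2})$ to control the contribution of the exceptional $\sigma_Z$-event and of the region $\eta$ large) yields $\E_{f_A}[\sup_{a\in K}|\hat f_A(a;h,\delta) - \E_{f_A}[\cdot\mid\sigma_Z]|^2] = O((\log n/n)^{2\alpha/((\alpha+2)(\beta+1))})$. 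Combining with the uniform bias bound and taking the supremum over $f_A \in {\cal F}$ finishes the proof.

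The main obstacle I anticipate is the conditional control of the spacings $Z_{(j+1)} - Z_{(j)}$: the Bernstein bound and the variance estimate both require that, off a negligible event, every retained spacing is of exact order $\delta$, and that the number of retained indices is of order $\delta^{-1} \asymp (n/\log n)^{1/(\beta+1)}$. Establishing this uniformly in $f_X$ satisfying \eqref{eq:decayX} — i.e., two-sided control of the order statistics of the transformed design on $[-\pi/2+\delta,\pi/2-\delta]$ with the $\log n$ inflation of $\delta$ providing the margin for a union bound over the $O(\delta^{-1})$ gaps — is the technical heart and presumably already contained in the spacing lemmas prepared in Section \ref{sec:proofs} for Theorems \ref{T:1} and \ref{T:low}; here one simply needs the high-probability (rather than in-expectation) version, which the extra logarithmic factor is precisely designed to deliver.
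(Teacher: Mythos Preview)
Your overall architecture---grid/net over $K$, Lipschitz control of the increments via $\|\partial_x K(\cdot;h)\|_\infty = O(h^{-3})$, an exponential inequality conditional on $\sigma_Z$, and a union bound paid for by the extra $\log n$---is exactly what the paper does. The decomposition into bias (handled uniformly by the H\"older assumption on ${\cal F}(K,\dots)$) and stochastic fluctuation is also the same.

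There is, however, a genuine gap in your variance estimate for the fluctuation term. You bound the conditional variance by $O(h^{-4}\delta)$, obtained from $\sumdel (Z_{(j+1)}-Z_{(j)})^2 \le (\max\text{ spacing})\cdot \sumdel(Z_{(j+1)}-Z_{(j)}) = O(\delta)$ on the good event. But with this variance proxy your claimed balance ``$\eta^2/(h^{-4}\delta)\asymp \log n$'' is false: inserting $\eta \asymp (\log n/n)^{\alpha/((\alpha+2)(\beta+1))}$, $h \asymp (\log n/n)^{1/((\alpha+2)(\beta+1))}$, $\delta \asymp (\log n/n)^{1/(\beta+1)}$ gives
\[
\frac{\eta^2}{h^{-4}\delta} \;\asymp\; \Big(\frac{\log n}{n}\Big)^{1/(\beta+1)} \longrightarrow 0,
\]
so your Bernstein bound only yields a probability bounded away from $0$, and the union bound over the grid fails by a polynomial factor. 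The issue is that bounding every spacing by its maximum $O(\delta)$ throws away a factor $n^{1/(\beta+1)}$: the spacings are highly inhomogeneous (of order $1/n$ near the center, of order $1/(n\delta^\beta)$ near the edges), and the correct order is $\sumdel(Z_{(j+1)}-Z_{(j)})^2 \asymp n^{-1}\delta^{1-\beta}$, not $\delta$.

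The paper avoids this entirely, and more simply than you anticipate: it applies Hoeffding's inequality conditionally on $\sigma_Z$ with the \emph{random} sum $h^{-4}\sumdel(Z_{(j+1)}-Z_{(j)})^2$ left in the exponent, sets $r_n = (\log n)\, h^{-4}\sumdel(Z_{(j+1)}-Z_{(j)})^2$, bounds $\E[r_n^{-1}\sup_{a\in I_\epsilon}|\cdots|^2 \mid \sigma_Z]$ by a constant via the tail integral, and then takes an unconditional expectation using the \emph{in-expectation} spacings bound of Lemma~\ref{lem:spacings}, namely $\E\big[\sumdel(Z_{(j+1)}-Z_{(j)})^2\big] = O(n^{-1}\delta^{1-\beta})$. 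With this sharper variance the balance becomes $\eta^2/(h^{-4}n^{-1}\delta^{1-\beta}) \asymp \log n$, which is exactly what is needed. In particular, no high-probability control of individual spacings is required; the ``technical heart'' you identify is unnecessary, and the expectation version of the spacings lemma already used for Theorem~\ref{T:1} suffices.
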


\section{Adaptation}\label{sec:adapt}
%%%%%%%%%%%%%%%%%%%%%%%%%%%%%%%%%%%%%%%

\subsection{Adaptation with respect to $\beta$ for given smoothness}
%%%%%%%%%%%%%%%%%%%%%%%%%%%%%%%%%%%%%%%%%%%%%%%%%%%%%%%%%%%%%%%%%%%%%%%%%%%%

Assume that (\ref{eq:decayX}) holds with unknown $\beta >1$. 
If there are at least two observations $Z_j$ in the interval $[-\pi/2 + \delta, \pi/2 - \delta]$ so that $\sumdel$ is not the sum over the empty set, we set
\begin{align}\label{eq:lnwn}
L_{n}(\delta) & = \min\big\{ Z_j \mid Z_j \geq -\pi/2 + \delta\big\}, \quad
R_{n}(\delta)  = \max\big\{ Z_j \mid Z_j \leq \pi/2 - \delta\big\},
\end{align}
otherwise we put $L_n(\delta)=-\pi/2$ and $R_n(\delta) = \pi/2$.
To define a selection rule for $\delta$, define the function
\begin{align*}
{\cal C}_{n}(\delta) \, := \, & \, \sumdel\, (Z_{(j+1)}-Z_{(j)})^2 \, + \, \delta^{-1} \sumdel\, (Z_{(j+1)}-Z_{(j)})^3 \, \\ 
& \qquad \qquad \qquad  + \, (L_n(\delta) + \pi/2)^2 \, + \, (\pi/2 - R_n(\delta))^2 + \delta^2\,,
\end{align*}
which is continuous except at the sites $\pi/2$, $Z_j + \pi/2$ and $\pi/2 - Z_j$ for $j=1,\ldots,n$. Now choose $\delta = \hat{\delta}_n$ in the interval $[n^{-1/2},\pi/4]$ such that 
\begin{equation} \label{eq:hatdelta} 
{\cal C}_{n}(\hat{\delta}_n) \, \leq \, \exp(-n) + \inf_{\delta \in [n^{-1/2},\pi/4]} {\cal C}_{n}(\delta)\,. 
\end{equation}
%
%where $\delta_{\text{max}} \leq \pi/2$ remains to be specified. 
%By arranging that $\hat{\delta}_n$ is the smallest element of the interval $[1/n,\delta_{\text{max}}]$ which satisfies (\ref{eq:hatdelta}), one can guarantee that $\hat{\delta}_n$ is a random variable which is measurable with respect to $\sigma_Z$. 
%

The next proposition shows that the convergence rate from Theorem \ref{T:1} does not deteriorate if only $\beta$ is unknown but $\alpha$ is known.

\begin{prop} \label{P:1}
	Consider model (\ref{eq:model}) and assume that $f_X$ satisfies (\ref{eq:decayX}) for some unknown $\beta>1$. Choose $w$ satisfying the Assumption \ref{assum:kernel} for $2\, \lfloor \alpha \rfloor \leq l$ for given $\alpha>0$. If $\hat{\delta}_n$ is chosen in (\ref{eq:hatdelta}) and 
	\[ \hat h_n = \big({\cal C}_{n}(\hat{\delta}_n)\big)^{\frac1{2\,( \alpha +2)}},\]
	then for the estimator $\hat{f}_A\big(a;\hat h_n, \hat{\delta}_n\big)$ we have that 
	$$ \sup_{f_A \in {\cal F}} \E_{f_A} \big[\big|\hat{f}_A\big(a;\hat h_n, \hat{\delta}_n\big) - f_A(a)\big|^2\big] \, = \, {\cal O}\big(n^{-\frac{ 2\, \alpha}{(\alpha+2)(\beta+1)}}\big)\,, $$
	where ${\cal F} = {\cal F}(a,\alpha,c_A,c_B,r_A,c_M)$.
\end{prop}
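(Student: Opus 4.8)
The plan is to run the conditional (given the design $\sigma_Z$) bias--variance analysis behind Theorem~\ref{T:1} with the data-driven pair $(\hat h_n,\hat{\delta}_n)$, which is admissible because both $\hat{\delta}_n$ and $\hat h_n=\mathcal{C}_n(\hat{\delta}_n)^{1/(2(\alpha+2))}$ are $\sigma_Z$-measurable, and then to control $\E[\mathcal{C}_n(\hat{\delta}_n)]$ by an oracle comparison based on \eqref{eq:hatdelta}. From the proof of Theorem~\ref{T:1} one extracts that, for every $\sigma_Z$-measurable $h>0$ and $\delta\in(0,\pi/4]$, and uniformly over $f_A\in\mathcal{F}$,
\[
\E_{f_A}\big[\,\big|\hat f_A(a;h,\delta)-f_A(a)\big|^2\,\big|\,\sigma_Z\,\big]\ \le\ C\big(h^{2\alpha}+h^{-4}\,\mathcal{C}_n(\delta)\big),
\]
with a constant $C$ depending only on $\alpha,c_A,c_B,r_A,c_M$ and on $c_X,C_X,\beta$: the squared bias splits into the kernel part $h^{2\alpha}$ (here $\ell\ge2\lfloor\alpha\rfloor$ is used) and a truncation part controlled by $h^{-4}$ times $(L_n(\delta)+\pi/2)^2+(\pi/2-R_n(\delta))^2+\delta^2$ (the factor $h^{-4}$ stemming from $\|K(\cdot;h)\|_\infty=\mathcal{O}(h^{-2})$), while the conditional variance is bounded by $h^{-4}$ times $\sumdel(Z_{(j+1)}-Z_{(j)})^2+\delta^{-1}\sumdel(Z_{(j+1)}-Z_{(j)})^3$; adding these gives the displayed bound.

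Since the rule for $\hat h_n$ is exactly the one equalising the two terms, $\hat h_n^{2\alpha}=\hat h_n^{-4}\mathcal{C}_n(\hat{\delta}_n)=\mathcal{C}_n(\hat{\delta}_n)^{\alpha/(\alpha+2)}$, so taking $(h,\delta)=(\hat h_n,\hat{\delta}_n)$ above bounds the conditional risk by $2C\,\mathcal{C}_n(\hat{\delta}_n)^{\alpha/(\alpha+2)}$. The map $t\mapsto t^{\alpha/(\alpha+2)}$ is concave on $[0,\infty)$ (its exponent lies in $(0,1)$), so taking expectations and applying Jensen's inequality,
\[
\sup_{f_A\in\mathcal{F}}\E_{f_A}\big[\big|\hat f_A(a;\hat h_n,\hat{\delta}_n)-f_A(a)\big|^2\big]\ \le\ 2C\big(\E[\mathcal{C}_n(\hat{\delta}_n)]\big)^{\alpha/(\alpha+2)},
\]
where the right-hand side does not depend on $f_A$ since $\mathcal{C}_n$ is a functional of $Z_1,\dots,Z_n$ only. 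It therefore remains to prove $\E[\mathcal{C}_n(\hat{\delta}_n)]=\mathcal{O}\big(n^{-2/(\beta+1)}\big)$, which then yields the asserted rate $\mathcal{O}\big(n^{-2\alpha/((\alpha+2)(\beta+1))}\big)$.

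For this, observe that for large $n$ the deterministic value $\delta_n^\ast\asymp n^{-1/(\beta+1)}$ lies in $[n^{-1/2},\pi/4]$ --- this is where $\beta>1$, hence $1/(\beta+1)<1/2$, enters --- so by \eqref{eq:hatdelta}, $\E[\mathcal{C}_n(\hat{\delta}_n)]\le e^{-n}+\E[\mathcal{C}_n(\delta_n^\ast)]$. Under Assumption~\ref{assump::designdens} the density of $Z_j$ satisfies $f_Z(z)\asymp(\pi/2-|z|)^\beta$ as $|z|\to\pi/2$ and is bounded away from $0$ and $\infty$ on compact subsets of $(-\pi/2,\pi/2)$. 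Splitting the index set of $\sumdel$ into a bulk part and dyadic shells approaching the cut-off points $\pm(\pi/2-\delta_n^\ast)$, standard estimates for spacings of order statistics of i.i.d.\ samples give $\E\big[\sumdel(Z_{(j+1)}-Z_{(j)})^2\big]=\mathcal{O}\big(n^{-2/(\beta+1)}\big)$, $\E\big[(\delta_n^\ast)^{-1}\sumdel(Z_{(j+1)}-Z_{(j)})^3\big]=\mathcal{O}\big(n^{-2/(\beta+1)}\big)$, and $\E[(L_n(\delta_n^\ast)+\pi/2)^2]+\E[(\pi/2-R_n(\delta_n^\ast))^2]=\mathcal{O}\big((\delta_n^\ast)^2\big)$, while $(\delta_n^\ast)^2=n^{-2/(\beta+1)}$; the event on which $\sumdel$ is empty has super-exponentially small probability and is negligible. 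Summing the five terms gives $\E[\mathcal{C}_n(\delta_n^\ast)]=\mathcal{O}\big(n^{-2/(\beta+1)}\big)$, which completes the argument; all constants are uniform over $\mathcal{F}$ because the conditional bound above is uniform over $\mathcal{F}$ and $\E[\mathcal{C}_n(\delta_n^\ast)]$ depends only on $f_X$.

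I expect the first step to be the main obstacle: one must verify that the proof of Theorem~\ref{T:1} actually yields the conditional risk bound in the precise homogeneous form $h^{2\alpha}+h^{-4}\mathcal{C}_n(\delta)$, so that the rule $\hat h_n=\mathcal{C}_n(\hat{\delta}_n)^{1/(2(\alpha+2))}$ genuinely balances bias and variance, with a constant uniform over $\mathcal{F}$ and with the bound remaining valid for random, $\sigma_Z$-measurable tuning parameters. A secondary difficulty is the bookkeeping of the spacing sums near $\pm\pi/2$, where $f_Z$ degenerates like $(\pi/2-|z|)^\beta$ and one must check that for $\beta>1$ the extreme dyadic shells do not overwhelm the bulk.
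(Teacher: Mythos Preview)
Your approach coincides with the paper's: obtain a conditional (given $\sigma_Z$) risk bound with $\sigma_Z$-measurable tuning, balance it via $\hat h_n=\mathcal{C}_n(\hat\delta_n)^{1/(2(\alpha+2))}$, apply Jensen to $t\mapsto t^{\alpha/(\alpha+2)}$, and then use \eqref{eq:hatdelta} with the deterministic oracle $\delta_n=n^{-1/(\beta+1)}$ together with the spacing lemma to bound $\E[\mathcal{C}_n(\delta_n)]$.

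Two corrections to your conditional bound. First, the attribution of the pieces of $\mathcal{C}_n$ is off: the term $\delta^{-1}\sumdel(Z_{(j+1)}-Z_{(j)})^3$ arises from the conditional \emph{bias} (the term $I_{1,1}$ in the analysis of \eqref{eq:bias_estimate}), not from the conditional variance; the variance contributes only $h^{-4}\sumdel(Z_{(j+1)}-Z_{(j)})^2$; and the summand $\delta^2$ in $\mathcal{C}_n(\delta)$ does not come from the truncation at all---it is inserted precisely so that $\hat h_n^2\ge\hat\delta_n$. Second, and more importantly, the bound from \eqref{est:variancecomp} is \emph{not} of the clean form $h^{2\alpha}+h^{-4}\mathcal{C}_n(\delta)$ for arbitrary $(h,\delta)$: there is an additional term $|a|^2\,h^{-6}\sumdel(Z_{(j+1)}-Z_{(j)})^3$ (from $I_{1,2}$) and the indicator of the event that $\sumdel$ is empty. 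The paper absorbs the $h^{-6}$ term by the observation
\[
\hat h_n^{2}=\mathcal{C}_n(\hat\delta_n)^{1/(\alpha+2)}\ \ge\ \hat\delta_n^{\,2/(\alpha+2)}\ \ge\ \hat\delta_n,
\]
valid since $\hat\delta_n\le\pi/4\le1$ and $\mathcal{C}_n(\delta)\ge\delta^2$; this yields $\hat h_n^{-6}\sumdel(\cdot)^3\le \hat\delta_n^{-1}\hat h_n^{-4}\sumdel(\cdot)^3\le \hat h_n^{-4}\mathcal{C}_n(\hat\delta_n)$. This is exactly the verification you flagged as the ``main obstacle''; once you add it, together with the bound \eqref{eq:probcontains} for the empty-sum indicator (using $\hat\delta_n\le\pi/4$), your argument is complete and matches the paper's.
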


\subsection{Adaptation by the Lepski method} \label{ss:Lepski}
%%%%%%%%%%%%%%%%%%%%%%%%%%%%%%%%%%%%%%%%%%%%%%%%%%%%%%%%%%%%%%

Finally we consider adaptivity with respect to both parameters $\beta$ and $\alpha$ based on a combination of Lepski's method, see \cite{L91} and \cite{LS97}, and the choice (\ref{eq:hatdelta}). Consider the grid of bandwidths 
\[ h_k= \hat{\delta}^{1/2}_n\, q^k, \qquad k\in \cK_n = \{0, \ldots, K\},\]
where $q > 1$, $K = K_n = \lfloor \log_q n \rfloor$ and $\hat{\delta}_n$ is defined in (\ref{eq:hatdelta}). 
Fix $a \in \R^2$ and denote 
\[ \hat f_k = \hat f_A(a; h_k, \hat{\delta}_n).\]
For $\CLe >0$ sufficiently large to be chosen we let 
\begin{align*}
 \hat k & = \max\big\{k \in \cK_n \mid |\hat f_k - \hat f_l|^2 \leq \, \CLe \, \sigma(l,n) \quad \forall \ l \leq k  , \ l \in \cK_n\big\},
 \end{align*}
where
\[ \sigma(k,n) = h_k^{-4}\, C_n\big(\hat{\delta}_n\big)\, \log n, \quad k \in \cK_n.\] 
\begin{thm} \label{T:4}
	Consider model (\ref{eq:model}) and assume that $f_X$ satisfies (\ref{eq:decayX}) for some unknown $\beta>1$. Choose $w$ according to Assumption \ref{assum:kernel} for some $l \in \N_0$. Then for sufficiently large $\CLe>0$ (e.g.~$\CLe = 20^2$ suffices), we have, for every $ \alpha >0$ with $2\, \lfloor \alpha \rfloor \leq l$, that 
	$$ \sup_{f_A \in {\cal F}} \E_{f_A} \big[\big|\hat{f}_A\big(a;h_{\hat k}, \hat{\delta}_n\big) - f_A(a)\big|^2\big]  =  {\cal O}\Big(n^{-\frac{ 2\, \alpha}{(\alpha+2)(\beta+1)}} (\log n)^{\frac{\alpha}{\alpha + 2}}\, \Big)\,, $$
where ${\cal F} := {\cal F}(a,\alpha,c_A,c_B,r_A,c_M)$.
\end{thm}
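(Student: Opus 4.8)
The plan is to establish Theorem~\ref{T:4} via the standard Lepski oracle argument, adapted to the present setting where the stochastic error term is data-driven through $\mathcal{C}_n(\hat\delta_n)$. First I would record a bias--variance type decomposition for each $\hat f_k$: building on the analysis behind Theorem~\ref{T:1} and Proposition~\ref{P:1}, one shows that, on an event of probability $1 - \mathcal{O}(n^{-1})$ (controlling the behaviour of the order statistics of the $Z_j$ near $\pm\pi/2$ and the Priestley--Chao cell widths), one has
\begin{equation*}
\big|\hat f_k - f_A(a)\big|^2 \,\le\, \mbox{const.}\cdot h_k^{2\alpha} \,+\, \xi_k,
\end{equation*}
where $\E[\xi_k \mid \sigma_Z] \le \mbox{const.}\cdot h_k^{-4}\,\mathcal{C}_n(\hat\delta_n)$ is the conditional variance bound (the $h^{-4}$ coming from $\int K^2 \lesssim h^{-3}$ sharpened by the extra factor, as in \eqref{eq:variancebound}), and the same bound holds for the increments $\E[|\hat f_k - \hat f_l|^2\mid\sigma_Z]$ with $h_l^{-4}$ in place of $h_k^{-4}$ for $l \le k$. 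The key point, inherited from Proposition~\ref{P:1}, is that $\mathcal{C}_n(\hat\delta_n) = \mathcal{O}_P\big(n^{-2/(\beta+1)}\big)$, so that $\sigma(k,n) \asymp h_k^{-4} n^{-2/(\beta+1)}\log n$ plays the role of the (inflated) variance proxy, with the $\log n$ factor inserted precisely so that a union bound over the $\mathcal{O}(\log n)$ grid points $k \in \cK_n$ is affordable.

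Second, I would prove the one-sided deviation inequality that the Lepski threshold is designed around. Define the "oracle" index $k^\ast = k^\ast(\alpha,\beta)$ as the largest $k \in \cK_n$ with $h_k^{2\alpha} \le \sigma(k,n)$, i.e.\ where squared bias is dominated by the inflated variance term; a short computation gives $h_{k^\ast} \asymp \big(n^{-1}\log n\big)^{1/((\alpha+2)(\beta+1))}$ up to the grid spacing $q$, so that both $h_{k^\ast}^{2\alpha}$ and $\sigma(k^\ast,n)$ are of order $(n^{-1}\log n)^{2\alpha/((\alpha+2)(\beta+1))}$ --- this is exactly the claimed rate. The deviation step is to show that for $\CLe$ large enough (the paper suggests $20^2$), on an event of probability $1 - \mathcal{O}(n^{-1})$, the "stochastic part" $\tilde\xi_{k,l} := |\hat f_k - \hat f_l|^2 - \E[\,\cdot\mid\sigma_Z]$ satisfies $\tilde\xi_{k,l} \le (\CLe/2)\,\sigma(l,n)$ simultaneously over all $l \le k$ in $\cK_n$. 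Conditionally on $\sigma_Z$ the estimator $\hat f_A(a;h,\hat\delta_n)$ is a sum of independent bounded (by $\mbox{const.}\cdot h^{-2}$) increments indexed by $j \in \sumdel$, so a Bernstein inequality applied conditionally, combined with the variance bound above and the $\log n$ inflation, yields the desired simultaneous control after integrating out $\sigma_Z$.

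Third, with these two events in hand the oracle inequality is routine. On the good event, $\hat k \ge k^\ast$: indeed for any $l \le k^\ast$ one has $|\hat f_{k^\ast} - \hat f_l|^2 \le 2(|\hat f_{k^\ast} - f_A(a)|^2 + |\hat f_l - f_A(a)|^2) \le \mbox{const.}\cdot(h_l^{2\alpha} + \sigma(l,n)) \le \CLe\,\sigma(l,n)$, using $h_l^{2\alpha} \lesssim \sigma(l,n)$ for $l \le k^\ast$ by definition of $k^\ast$; hence $k^\ast$ satisfies the defining condition for $\hat k$ and $\hat k \ge k^\ast$. Then, using $\hat k \in \cK_n$, $l = k^\ast \le \hat k$ in the Lepski condition, and the bias bound at $h_{k^\ast}$,
\begin{equation*}
\big|\hat f_{\hat k} - f_A(a)\big|^2 \,\le\, 2\big|\hat f_{\hat k} - \hat f_{k^\ast}\big|^2 + 2\big|\hat f_{k^\ast} - f_A(a)\big|^2 \,\le\, 2\CLe\,\sigma(k^\ast,n) + \mbox{const.}\cdot h_{k^\ast}^{2\alpha} + \mbox{(small)},
\end{equation*}
which is $\mathcal{O}\big((n^{-1}\log n)^{2\alpha/((\alpha+2)(\beta+1))}\big)$ on the good event. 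Finally I would handle the complementary (bad) event of probability $\mathcal{O}(n^{-1})$ by the crude bound $|\hat f_{\hat k} - f_A(a)|^2 \le \mbox{const.}\cdot h_{\hat k}^{-4} \le \mbox{const.}\cdot \hat\delta_n^{-2} \le \mbox{const.}\cdot n$ (since $h_{\hat k} \ge h_0 = \hat\delta_n^{1/2} \ge n^{-1/4}$), so its contribution to the risk is $\mathcal{O}(1)$, which is negligible --- wait, this needs slightly more care: one uses $h_k \ge \hat\delta_n^{1/2}$ and $\hat\delta_n \ge n^{-1/2}$ to get $|\hat f_{\hat k}|\le \mbox{const.}\cdot h_0^{-2} \le \mbox{const.}\cdot n^{1/2}$, hence the squared error on the bad event is $\mathcal{O}(n)$, and $n \cdot \mathcal{O}(n^{-1}) = \mathcal{O}(1)$, so one in fact needs the bad event to have probability $o\big(n^{-1}(\log n)^{\alpha/(\alpha+2)} \cdot n^{2\alpha/((\alpha+2)(\beta+1))-1}\big)$; since that target probability is $\gg$ any polynomial power $n^{-p}$ is false only if we are careless, so instead one simply takes the bad-event probability to be $\mathcal{O}(n^{-2})$ by choosing the constants in the concentration steps appropriately, making its risk contribution $\mathcal{O}(n^{-1})$, which is dominated by the claimed rate. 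The only genuinely delicate point --- and the main obstacle --- is the conditional Bernstein step and its interplay with the randomness of $\hat\delta_n$ and $\mathcal{C}_n(\hat\delta_n)$: because $\hat\delta_n$ is itself a function of $\sigma_Z$, one must verify that $\sigma(k,n)$ is $\sigma_Z$-measurable (it is) and that the variance proxy in Bernstein matches $h_k^{-4}\mathcal{C}_n(\hat\delta_n)$ up to the constants already absorbed in $\CLe$, uniformly over $k \le \hat k$ restricted to those $k$ relevant for the given $\alpha$; the $\log n$ inflation and the $\exp(-n)$ slack in \eqref{eq:hatdelta} are exactly what make this union bound and measurability bookkeeping go through cleanly.
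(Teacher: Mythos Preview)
Your overall Lepski scheme is correct and matches the paper's approach, but several steps are sketched too loosely and would need tightening. The main imprecision is your treatment of $k^*$ as if it yielded a deterministic rate: since $\sigma(k,n)=h_k^{-4}\mathcal{C}_n(\hat\delta_n)\log n$ is $\sigma_Z$-measurable, so is $k^*$, and the clean way to proceed (which the paper follows) is to bound $\E\big[|\hat f_{\hat k}-f_A(a)|^2\,\big|\,\sigma_Z\big]$ by $\mathrm{const.}\cdot\big[\mathcal{C}_n(\hat\delta_n)\log n\big]^{\alpha/(\alpha+2)}+\mathcal{O}(n^{-1})$ plus an indicator, and only then take expectation and invoke the argument from Proposition~\ref{P:1}; your claim $h_{k^*}\asymp(n^{-1}\log n)^{1/((\alpha+2)(\beta+1))}$ presupposes $\mathcal{C}_n(\hat\delta_n)\asymp n^{-2/(\beta+1)}$, which is not a pointwise statement. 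A second slippage is in your ``good event'': you define it via concentration of the \emph{differences} $|\hat f_k-\hat f_l|^2$ around their conditional mean, but in Step~3 you then use a pointwise bound on $|\hat f_l-f_A(a)|^2$, which requires instead concentration of each $\hat f_l$ around $\E[\hat f_l\mid\sigma_Z]$ together with the deterministic bound on the conditional bias $|\E[\hat f_l\mid\sigma_Z]-\tilde f_A(a;h_l)|$ (this Priestley--Chao error, bounded by $\mathrm{const.}\cdot h_l^{-2}\mathcal{C}_n(\hat\delta_n)^{1/2}$, is a third error source distinct from both the regularization bias $h_l^\alpha$ and the conditional variance, and your sketch conflates them).

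On the minor technical choices: the paper uses Hoeffding rather than Bernstein (sufficient and simpler, since the summands are bounded by $\|K(\cdot;h_l)\|_\infty\le h_l^{-2}$), and for the case $\hat k<k^*$ it does not argue via a single ``bad event'' but instead writes $\E\big[|\hat f_{\hat k}-f|^2\,\ind(\hat k\le k^*-1)\big]\le\sum_{k<k^*}\big(\E[|\hat f_k-f|^4\mid\sigma_Z]\big)^{1/2}\big(\Pb(\hat k=k\mid\sigma_Z)\big)^{1/2}$, using the crude fourth-moment bound $\E[|\hat f_k|^4\mid\sigma_Z]\le\mathrm{const.}\cdot n^2$ and $\Pb(\hat k=k\mid\sigma_Z)\le\mathrm{const.}\cdot K\,n^{-8}$. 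This avoids the back-and-forth in your final paragraph about calibrating the bad-event probability; your version works too once you fix the exponent, but the Cauchy--Schwarz route is cleaner.
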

Thus for adaptivity an additional logarithmic factor occurs in the pointwise rate under H\"older smoothness constraints.

\section{Proofs}\label{sec:proofs}
%%%%%%%%%%%%%%%%%%%%%%%%%%%%%%%%%%%

In the proofs we drop $f_A \in {\cal F}$ in $\E = \E_{f_A}$ and in $\Pb= \Pb_{f_A}$ from the notation. 

\subsection{Proofs for Section \ref{sec:bounds}}
%%%%%%%%%%%%%%%%%%%%%%%%%%%%%%%%%%%%%%%%%%%%%%%%%%%%%%

\begin{proof}[Proof of Theorem \ref{T:1}]

By passing to Cartesian coordinates in (\ref{eq:regexpress}) we can write
\begin{align}\label{eq:regexpress1}
%
%\begin{split}
\tilde f_A(a; h) & = \frac{1}{(2 \pi)^2}\, \int_{\R^2} \exp\big( - i a'\,b \big) \psi_A (b) w(h\,\|b\|)    \di b  = \big(f_A * \tilde w(\cdot/h)/h^2\big)(a), \\ 
\tilde w (a) & = \frac{1}{(2 \pi)^2}\, \int_{\R^2}\, \exp\big( - i a'\,b \big)\,w(\|b\|)\,  \di b.\nonumber
%\end{split}
\end{align}
Assumption \ref{assum:kernel} guarantees that $\tilde w$ is a kernel of order $\ell$.  Then, using Taylor approximation as usual in kernel regularization,
see p.~37--38 in \cite{M09} for the argument in case of non-compactly supported kernels, the following asymptotic rate of the regularization bias term occurs
\begin{align} \nonumber
\big|f_A(a) - \tilde f_A(a; h)\big| & \, = \, \Big|f_A(a) - \int \tilde w(z) f_A(a - h z) dz\Big| \\ \label{eq:bias}& \, \leq \, C_{\text{Bias}}(\alpha, w, c_A, c_M)\cdot h^\alpha\,,
\end{align}
where the constant factor $C_{\text{Bias}}(\alpha, w, c_A, c_M)$ only depends on $c_A$, $c_M$, $w$ and $\alpha$. 	

 Now let $\sigma_Z$ denote the $\sigma$-field generated by $Z_1, \ldots, Z_n$, and consider the conditional bias-variance decomposition 
\begin{align}\label{eq:biasvarconddecomp}
\begin{split}
\E \big[\big|\hat f_A(a; h, \delta) - \tilde f_A(a; h) \big|^2\big] = & \E \Big[\text{Var}\big(\hat f_A(a; h, \delta)|\sigma_Z \big) \Big] \\ & + \E \big[\big| \E\big[\hat f_A(a; h, \delta)| \sigma_Z\big] - \tilde f_A(a; h) \big|^2\big].
\end{split}
\end{align}
Since  $U_{[1]}, \ldots, U_{[n]}$ are independent  given $\sigma_Z$, observing from (\ref{eq:resulkernel}) that $\|K(\cdot;h)\|_\infty = {\cal O}(h^{-2})$,  we may bound 
\begin{align} \nonumber 
\text{Var}\big(\hat f_A(a; h, \delta)|\sigma_Z\big) %
& \leq \sumdel\,\big(Z_{(j+1)} - Z_{(j)} \big)^2 \\ \nonumber & \, \cdot \int_\R K^2\big( u - a_0 \cos Z_{(j)} - a_1 \sin Z_{(j)};h\big)\, f_{U|Z}\big(u|Z_{(j)}\big)\, \di u\notag\\\label{eq:variancebound} 
& \leq \mbox{const.}\cdot h^{-4}\cdot \sumdel\,\big(Z_{(j+1)} - Z_{(j)} \big)^2\, , 
\end{align}
where the constant factor only depends on $w$. Therein we use the notation (\ref{eq:sumshortnot}).   
For the conditional expectation, we obtain that
\begin{align*}
\E\big[\hat f_A(a; h, \delta)| \sigma_Z\big] & = \frac{1}{(2 \pi)^2}\, \int_\R\, w(th)\, |t|\, \, \int_{-\pi/2}^{\pi/2}\, \tilde \psi(t,z)\, \di z\, \di t
\end{align*}
where we set
\[ \tilde \psi(t,z) = \sumdel \psi_{U|Z}(t | Z_{(j)}) \, \exp\big(  - i t a_0 \cos Z_{(j)} - i t a_1 \sin Z_{(j)}\big) \, \ind(Z_{(j)} \leq z \leq Z_{(j+1)}) .\]

We deduce that
\begin{align} \label{eq:bias_estimate}
\big| \E\big[\hat f_A(a; h, \delta)| \sigma_Z\big] - \tilde f_A(a; h) \big|^2 & \, \leq \, I_1 + I_2 + I_3\,,
\end{align}
where 
\allowdisplaybreaks
\begin{align*}
I_1 \, := \, & \frac{3}{(2 \pi)^4}\, \Big|\int_{L_{n}(\delta)}^{R_{n}(\delta)}\, \int_\R\, w(th)\, |t|\,  \big( \tilde \psi(t,z) -  \exp\big( - i t (a_0 \cos z + a_1 \sin z) \big)\\ & \hspace{8.3cm} \cdot \psi_{U|Z} \big(t|z\big)\,\big)\, \di t\,  \di z\, \Big|^2\\
I_2 \, := \, &   \frac{3}{(2 \pi)^4}\, \Big|\int^{L_{n}(\delta)}_{- \pi/2}\, \int_\R\, w(th)\, |t|\,  \exp\big( - i t (a_0 \cos z + a_1 \sin z) \big)\\ & \hspace{8.6cm} \cdot \psi_{U|Z} \big(t|z\big)\,\di z\, \di t \Big|^2\\
I_3 \, := \,  &  \frac{3}{(2 \pi)^4}\, \Big|\int_{R_{n}(\delta)}^{ \pi/2}\, \int_\R\, w(th)\, |t|\,   \exp\big( - i t (a_0 \cos z + a_1 \sin z) \big)\\ & \hspace{8.4cm} \cdot \psi_{U|Z} \big(t|z\big)\,\di z\, \di t \Big|^2\,,
\end{align*}
where $L_{n}(\delta)$ and $R_{n}(\delta)$ are defined in (\ref{eq:lnwn}). 
If there are no two consecutive $Z_j$ in the interval $[-\pi/2 + \delta, \pi/2 - \delta]$, then $\tilde \psi(t,z)=0$ (indeed $\hat f_A(a; h, \delta)=0$). In this case, by our convention we have $L_n(\delta)=-\pi/2$ and $R_n(\delta) = \pi/2$ so that $I_2 = I_3 = $ and $I_1$ is the integral from $-\pi/2$ to $\pi/2$, as required for the estimate \eqref{eq:bias_estimate} to remain true in this case.

First, consider the term $I_3$. Using the Cauchy-Schwarz inequality, it holds that 
\allowdisplaybreaks
\begin{align}\label{eq:truncationterm}
\begin{split}
I_3 \, \leq \, &  \frac{3}{(2 \pi)^4}\, \int_{-1/h}^{1/h}\,t^2 \,  dt \, \int_{-1/h}^{1/h}\,  \Big|  \int_{R_{n}(\delta)}^{ \pi/2}\, \exp\big( - i t (a_0 \cos z + a_1 \sin z) \big)\\ & \hspace{8.4cm} \cdot \psi_{U|Z} \big(t|z\big)\,\di z\, \Big|^2\,  \di t \\
\leq \, &  \frac{4}{(2 \pi)^4}\cdot h^{-4} \cdot \big(\pi/2-R_{n}(\delta)\big)^2\,.
\end{split}
\end{align}
Analogously we establish that 
\begin{align*}
I_2 \leq  \frac{4}{(2 \pi)^4}\cdot h^{-4} \cdot \big(L_{n}(\delta) - \pi/2\big)^2\,.
\end{align*}
Finally, consider the term $I_1$. In case when there are two consecutive $Z_j$ in the interval $[-\pi/2 + \delta, \pi/2 - \delta]$ so that the sum in (\ref{eq:sumshortnot}) is not empty, it holds that
\begin{align} \nonumber
I_1 & \, \leq\, \frac{3}{(2\pi)^4} h^{-2} \cdot \Big\{ \sumdel \int_{Z_{(j)}}^{Z_{(j+1)}}\int\limits_{|t|\leq 1/h} \big|\tilde\psi(t,z) - \exp\big( - i t (a_0 \cos z + a_1 \sin z) \big)\\ \nonumber & \hspace{9cm} \cdot \psi_{U|Z} \big(t|z\big)\big| \di t\, \di z\Big\}^2
\end{align}
Now, for $z \in [Z_{(j)}, Z_{(j+1)} )$, we get that
\allowdisplaybreaks
\begin{align*}
\big|&\tilde\psi(t,z) - \exp\big( - i t (a_0 \cos z + a_1 \sin z) \big)\, \psi_{U|Z} \big(t|z\big)\big| \\
& = \big| \psi_{U|Z}(t | Z_{(j)}) \exp\big(  - i t a_0 \cos Z_{(j)} - i t a_1 \sin Z_{(j)}\big) -  \psi_{U|Z} \big(t|z\big)\\ & \hspace{7.2cm}\cdot \exp\big( - i t (a_0 \cos z + a_1 \sin z) \big) \big|\,\\ &
\leq \,  \big| \psi_{U|Z}(t | Z_{(j)}) - \psi_{U|Z}(t | z)\big| \, + \, |t| \cdot |a| \cdot (Z_{(j+1)} - Z_{(j)}) \\
& = \, \big| \psi_{A}(t \cos Z_{(j)}, t \sin Z_{(j)}) - \psi_{A}(t \cos z, t \sin z) \big| \, + \, |t| \cdot |a| \cdot \big(Z_{(j+1)} - Z_{(j)}\big)\,,
\end{align*}
according to (\ref{eq:relcharafct}). Hence we may bound
\allowdisplaybreaks
\begin{align*}
I_1 \,&\leq \text{const.}\,\cdot \,  h^{-2}  \, \Big(\Big\{\int_{-\pi/2}^{\pi/2}\, \int_{|t| \leq h^{-1}}\, \sumdel \ind(z \in [Z_{(j)}, Z_{(j+1)}]) \, \big| \psi_{A}(t \cos Z_{(j)}, t \sin Z_{(j)})\\
& \qquad  - \psi_{A}(t \cos z, t \sin z) \big|\, \di t\, \di z\, \Big\}^2\\
& \quad +  \, \Big\{\int_{-\pi/2}^{\pi/2}\, \int_{|t| \leq h^{-1}}\,  \sumdel \ind(z \in [Z_{(j)}, Z_{(j+1)}])\,  |t| \cdot |a| \cdot \big(Z_{(j+1)} - Z_{(j)}\big)\, \di t\, \di z\, \Big\}^2\Big)\\
& = \text{const.}\,\cdot \,  h^{-2} \big(I_{1,1} + I_{1,2}\big).
\end{align*}
Applying the Cauchy-Schwarz inequality gives for $I_{1,2}$
\[ I_{1,2} \leq \text{const.}\,\cdot \,  h^{-4}\, |a|^4 \, \sumdel\,\big(Z_{(j+1)} - Z_{(j)} \big)^3.\]
For $I_{1,1}$ interchanging sum and integrals we obtain 
\allowdisplaybreaks
\begin{align*}
& \int_{-\pi/2}^{\pi/2} \, \int_{|t|\leq 1/h}\,\ind(z \in [Z_{(j)}, Z_{(j+1)}])\, \big| \psi_{A}(t \cos Z_{(j)}, t \sin Z_{(j)}) - \psi_{A}(t \cos z, t \sin z) \big|\, \di t\, \di z\,\\
= & \int_{Z_{(j)}}^{Z_{(j+1)}}\int_{|t|\leq 1/h}  \big| \psi_{A}(t \cos Z_{(j)}, t \sin Z_{(j)}) - \psi_{A}(t \cos z, t \sin z) \big|\, \di t\, \di z \\
\leq & \int_{Z_{(j)}}^{Z_{(j+1)}}\int_{|t|\leq 1/h}  |t|\,\Big|\, \int_{Z_{(j)}}^z\,  \langle \nabla \psi_{A}(t \cos u, t \sin u), (-\sin u, \cos u)\rangle \, \di u \Big|\, \di t\, \di z\, \\
\leq & \int_{Z_{(j)}}^{Z_{(j+1)}}\int_{|t|\leq 1/h}  |t|\, \int_{Z_{(j)}}^z\,  \sup_{y \in \R}\, | \nabla \psi_{A}(t \cos u, y)| \,  \di u \, \di t\, \di z\,\\
\leq & 2\, h^{-1}\, \int_{Z_{(j)}}^{Z_{(j+1)}}  \, \int_{Z_{(j)}}^z\, \int_{t \in \R} \sup_{y \in \R}\, | \nabla \psi_{A}(t \cos u, y)| \, \di t\,   \di u \, \di z\\
\leq & 2\, c_B\, h^{-1}\, \int_{Z_{(j)}}^{Z_{(j+1)}}  \, \int_{Z_{(j)}}^z\, \frac{1}{\cos u }   \di u \, \di z\\ 
= & 2\, c_B\, h^{-1}\, \big(Z_{(j+1)}-Z_{(j)}\big)\, \int_{Z_{(j)}}^{Z_{(j+1)}}\, \frac{1}{\cos u }   \di u 
%\leq & 2\, c_B\, h^{-1}\, \big(Z_{(j+1)} - Z_{(j)}\big)  \, \int_{Z_{(j)}}^{Z_{(j+1)}}\, \frac{1}{\cos z }  \,dz 
\end{align*}
Using the Cauchy-Schwarz inequality twice yields
\allowdisplaybreaks
\begin{align*}
I_{1,1} & \,\leq \text{const.}\,\cdot \,  h^{-2}\, \sumdel\, \big(Z_{(j+1)}-Z_{(j)}\big)\, \int_{Z_{(j)}}^{Z_{(j+1)}}\, \frac{1}{\cos u }  \di u\\
& \leq \text{const.}\,\cdot \,  h^{-2}\, \Big(\int_{-\pi/2+\delta}^{\pi/2-\delta}\, \sumdel \ind(z \in [Z_{(j)}, Z_{(j+1)}])\,  \,\big(Z_{(j+1)}-Z_{(j)}\big)\,    \frac{1}{\cos z } \,dz  \Big)^2\\
& \leq \text{const.}\,\cdot \, h^{-2}\, \int_{-\pi/2+\delta}^{\pi/2-\delta}\, \frac{1}{\cos^2 z }\, \di z\, \int_{-\pi/2+\delta}^{\pi/2-\delta}\, \sumdel \ind(z \in [Z_{(j)}, Z_{(j+1)}])\,  \,\big(Z_{(j+1)}-Z_{(j)}\big)^2\,    \,\di z \\
& \leq \text{const.}\,\cdot \,  h^{-2}\, \int_{-\pi/2+\delta}^{\pi/2-\delta}\, \frac{1}{\cos^2 z }\, \di z\, \sumdel \,\big(Z_{(j+1)}-Z_{(j)}\big)^3\,   \\ 
& \leq \text{const.}\,\cdot \,  \delta^{-1}\, h^{-2}\,  \sumdel \,\big(Z_{(j+1)}-Z_{(j)}\big)^3\,. 
\end{align*}
Hence, the term $I_1$ obeys the upper bound
\begin{align} \nonumber
I_1  \, \leq \, \mbox{const.}\cdot \big(|a|^2 \cdot h^{-6} + \delta^{-1}\, h^{-2} \big)\cdot \sumdel\,\big(Z_{(j+1)} - Z_{(j)} \big)^3. %\label{eq:risk_bounds} 
\end{align}
%
%Again, the constant factor only depends on $w$ and $c_B$. Using the Cauchy-Schwarz inequality yields that the second summand in (\ref{eq:risk_bounds}) is bounded from above by
%\begin{align*} & h^{-4} \int_{-\pi/2+\delta}^{\pi/2-\delta} \frac1{\cos^2 z}\, dz \cdot \sumdel\, (Z_{(j+1)} - Z_{(j)})^3 \asymp\, h^{-4} \delta^{-1} \cdot \sumdel\, (Z_{(j+1)} - Z_{(j)})^3\,. 
%\end{align*}
%%
%
Finally, if there are no two consecutive $Z_j$ in the interval $[-\pi/2 + \delta, \pi/2 - \delta]$, we simply have $I_1 \leq \big| \tilde  f_A(a; h) \big|^2 \leq f_A(a)^2 + \text{const.}\,\cdot h^{2 \alpha} \leq \text{const.}$
Collecting the terms that bound \eqref{eq:bias_estimate} and using \eqref{eq:variancebound},  from \eqref{eq:biasvarconddecomp} we obtain that
\allowdisplaybreaks
\begin{align} 
\E &\big[\big|\hat f_A(a; h, \delta) - \tilde f_A(a; h) \big|^2\, \big|\, \sigma_Z\big] \notag\\
& \leq  \,\mbox{const.}\,\cdot h^{-4}\,   \Big\{  \big(\pi/2-R_{n}(\delta)\big)^2 + \big(L_{n}(\delta)+ \pi/2\big)^2 \notag\\
& \hspace{3.5cm}  \,+ \sumdel (Z_{(j+1)} - Z_{(j)})^2\, +\, \delta^{-1} \cdot  \sumdel (Z_{(j+1)} - Z_{(j)})^3 \Big\}\notag \\
& \,  \hspace{5cm} +\mbox{const.}\,\cdot \Big\{\, |a|^2 \,  h^{-6}\cdot  \sumdel \big(Z_{(j+1)} - Z_{(j)} \big)^3 \,\notag\\  
& \hspace{1.6cm} + \, \ind\big(Z_{(j)}< -\pi/2 + \delta \text{ or } Z_{(j+1)}> \pi/2 - \delta \quad \forall \ j=1, \ldots, n-1 \big)\Big\}.\label{est:variancecomp}
	\end{align}
Here, the last term takes care of the event in which the sum $\sumdel$ is empty and the estimator actually is zero. In order to bound the terms in \eqref{est:variancecomp} involving the order statistics, we note that since $\beta >1$, 
\[ \int_\delta^{\pi/2} u^{-\beta} \di u \, \asymp \delta^{1-\beta}, \qquad \int_\delta^{\pi/2}\, u^{-2\beta} \di u  \asymp \,\delta^{1-2\, \beta}.  \]
From (\ref{eq:bias}) and (\ref{est:variancecomp}) and Lemma \ref{lem:spacings} we obtain for $\delta \leq \pi/4$ that
\begin{align} \nonumber
& \E \big[\big|\hat f_A(a; h, \delta) - f_A(a) \big|^2\big] \\ \nonumber & \leq \mbox{const.}\cdot \Big\{ h^{2 \alpha} + h^{-4}\, \big(\delta + \frac{1}{c_Z\, n \, \delta^\beta}\big)^2 \,+ \, h^{-4}\, n^{-1}\, \delta^{1-\beta}\,+ h^{-4}\, \delta^{-1}\, n^{-2}\, \delta^{1-2 \beta}\,  \\  \nonumber
&   \hspace{3.5cm} +  h^{-6}\, n^{-2}\, \delta^{1-2\beta} + n \, \exp\big( - c_Z\, (n-1)\,(\pi/4)^\beta \big)\Big\}.
\end{align}
Upon inserting the rates for $\delta$ and $h$ we obtain the result. 

\end{proof}
%%%%%%%%%%%%%%%%%%%%%%%%%%%%%%%%%%%%%%%%%%%%%%%%%%%%%%%%%%%%%%%%%%%%%%%%%%%%%%%%%%%%%%%%%%%%%%%%%%%%%%%%%%%%%%%
%%%%%%%%%%%%%%%%%%%%%%%%%%%%%%%%%%%%%%%%%%%%%%%%%%%%%%%%%%%%%%%%%%%%%%%%%%%%%%%%%%%%%%%%%%%%%%%%%%%%%%%%%%%%%%%
%%%%%%%%%%%%%%%%%%%%%%%%%%%%%%%%%%%%%%%%%%%%%%%%%%%%%%%%%%%%%%%%%%%%%%%%%%%%%%%%%%%%%%%%%%%%%%%%%%%%%%%%%%%%%%%
%%%%%%%%%%%%%%%%%%%%%%%%%%%%%%%%%%%%%%%%%%%%%%%%%%%%%%%%%%%%%%%%%%%%%%%%%%%%%%%%%%%%%%%%%%%%%%%%%%%%%%%%%%%%%%%

\begin{proof}[Proof of Theorem \ref{T:low}]
 We introduce the functions
$$ f_{A,\theta}(a_0,a_1) \, := \, \alpha_n \beta_n f_0(\alpha_n a_0,\beta_n a_1) \, + \, c_L \cdot \theta \cdot \cos(2 \beta_n a_1) \cdot \alpha_n \beta_n \phi(\alpha_n a_0, \beta_n a_1)\,, $$
for $\theta \in \{0,1\}$, some constant $c_L>0$ and some sequences $(\alpha_n)_n \downarrow 0$ and $(\beta_n)_n \uparrow \infty$ which remain to be selected; moreover we specify  
$$ f_0(a_0,a_1) := \frac1{\pi^2 (1+a_0^2) (1+a_1^2)}\,, $$
and   
$$ \phi(a_0,a_1) := \varphi(a_0) \, \varphi(a_1)\,, $$
where  
$$ \varphi(x) \, := \, \frac{1 - \cos(x)}{\pi x^2}\,. $$
We verify that $f_{A,0}$ is a probability density as $f_0$ and $\varphi$ are probability densities. The Fourier transform of $f_{A,\theta}$ equals
\begin{align*} f_{A,\theta}^{ft}(x,y)  \, = \, f_0^{ft}(x/\alpha_n,y/\beta_n) & \, + \, \frac12 c_L \cdot \theta \cdot \phi^{ft}\big(x/\alpha_n,(y+2\beta_n)/\beta_n\big) \\ & \, + \, \frac12 c_L \cdot \theta \cdot \phi^{ft}\big(x/\alpha_n,(y-2\beta_n)/\beta_n\big)\,, \end{align*}
so that 
$$ \iint f_{A,\theta}(a_0,a_1) \di a_0 \, \di a_1 \, = \, f_{A,\theta}^{ft}(0) \, = \, f_{A,0}^{ft}(0) \, = \, 1\,, $$
since $\varphi^{ft}$ is supported on the interval $[-1,1]$. Choosing the constant $c_L>0$ sufficiently small we can guarantee that $f_{A,1}$ is a non-negative function and satisfies the inequality
\begin{equation} \label{eq:rev1} f_{A,1}(a_0,a_1) \, \geq \, c_L^* \alpha_n \beta_n f_0(\alpha_n a_0,\beta_n a_1) \, \geq \, 0\,, \qquad \forall a_0,a_1\in \mathbb{R}\,, \end{equation}
for some constant $c_L^* \in (0,1)$. Thus, $f_{A,1}$ is a probability density as well. Furthermore we verify that $f_{A,\theta} \in {\cal F}$ for both $\theta\in \{0,1\}$ under the constraint 
\begin{equation} \label{eq:alpha_n_beta_n} \alpha_n \asymp \beta_n^{-\alpha-1}\,, \end{equation}
as $c_A$ and $c_B$ may be viewed as sufficiently large. Therein note that (\ref{eq:Lipschitz}) is satisfied as $\psi_{A,\theta}$ can be written as the sum of two functions $(x,y) \mapsto \psi_0(x/\alpha_n) \cdot \psi_1(y/\beta_n)$ where $\psi_j$, $j=0,1$ are bounded, weakly differentiable, integrable functions whose weak derivatives are essentially bounded and integrable as well. 

The squared pointwise distance between $f_{A,0}$ and $f_{A,1}$ at $0$ equals
\begin{equation} \label{eq:lb.5} \big|f_{A,0}(0) - f_{A,1}(0)\big|^2 \, = \, c_L^2 \alpha_n^2 \beta_n^2 / (4\pi^2) \, \asymp \, \beta_n^{-2\alpha}\,. \end{equation}

Using (\ref{eq:rev1}), the conditional density of $Y_j$ given $X_j$ under the parameter $\theta$ equals 
	\allowdisplaybreaks
\begin{align*} f_{Y_j\mid X_j, \theta}(y) & \, = \, \int f_{A,\theta}(y-a_1 X_j,a_1) \di a_1 \, \geq \, c_L^* \alpha_n \beta_n \int f_0(\alpha_n (y-a_1 X_j),\beta_n a_1) \di a_1 \\
& \, \geq \, \frac{c_L^* \alpha_n \beta_n}{\pi^2} \int \frac1{1 + 2 \alpha_n^2 y^2 + 2 \alpha_n^2 a_1^2 X_j^2}\cdot \frac1{1 + \beta_n^2 a_1^2} \di a_1 \\
& \, \geq \, \frac{c_L^* \alpha_n \beta_n}{2\pi^2} \int_0^{1/\beta_n}  \frac1{1 + 2 \alpha_n^2 y^2 + 2 \alpha_n^2 a_1^2 X_j^2} \di a_1\\
& \, \geq \, \frac{c_L^*}{2\pi^2} \cdot \frac{\alpha_n}{1 + 2 \alpha_n^2 y^2 + 2 X_j^2 \alpha_n^2 / \beta_n^2}\,, \end{align*}
for all $y \in \mathbb{R}$. Moreover we have that
\begin{align*}  f_{Y_j\mid X_j, 1}(y) -  f_{Y_j\mid X_j, 0}&(y)  \, = \, c_L \alpha_n \beta_n \int \cos(2\beta_n a_1) \cdot \phi\big(\alpha_n (y - a_1 X_j), \beta_n a_1\big) \di a_1 \\
& \, = \, c_L \alpha_n \int \cos(2 a_1 \beta_n / \beta_n) \cdot \phi\big(\alpha_n (y - a_1 X_j /\beta_n), a_1\big) \di a_1\,,
\end{align*}
where the Fourier transform equals
\begin{align*} f_{Y_j\mid X_j, 1}^{ft}(t) -  f_{Y_j\mid X_j, 0}^{ft}(t) & \, = \, \frac12 c_L \phi^{ft}\big(t/\alpha_n, (tX_j + 2\beta_n)/\beta_n\big) \\ &\, + \, \frac12 c_L \phi^{ft}\big(t/\alpha_n, (tX_j - 2\beta_n)/\beta_n\big)\,. \end{align*}
Therefore the $\chi^2$-distance between the competing observation densities is bounded from above as follows, 
\begin{align} \nonumber
c_L^*\cdot \chi^2\big(&f_{Y_j\mid X_j, \theta=0},f_{Y_j\mid X_j, \theta=1}\big) \\ \nonumber & \, \leq \, (1/\alpha_n + 2X_j^2 \alpha_n/\beta_n^2) c_L^2 \int \big|\phi^{ft}\big(t/\alpha_n, (tX_j + 2\beta_n)/\beta_n\big)\big|^2 \di t \\ \nonumber & + (1/\alpha_n + 2X_j^2 \alpha_n/\beta_n^2) c_L^2 \int \big|\phi^{ft}\big(t/\alpha_n, (tX_j - 2\beta_n)/\beta_n\big)\big|^2 \di t \\ \nonumber 
& \, + \, 2 c_L^2 \alpha_n \, \int \Big|\frac{d}{dt}\phi^{ft}\big(t/\alpha_n, (tX_j + 2\beta_n)/\beta_n\big)\Big|^2 \di t \\ \label{eq:lb.10} & + 2 c_L^2 \alpha_n \, \int \Big|\frac{d}{dt}\phi^{ft}\big(t/\alpha_n, (tX_j - 2\beta_n)/\beta_n\big)\Big|^2 \di t\,,
\end{align}
where 
\allowdisplaybreaks
\begin{align*} \frac{d}{dt} \phi^{ft}\big(t/\alpha_n, (tX_j \pm 2\beta_n)/\beta_n\big) & \, = \,  \alpha_n^{-1}\, \big\{\varphi^{ft}\big\}'\big(t/\alpha_n\big) \cdot  \big\{\varphi^{ft}\big\}\big((tX_j \pm 2\beta_n)/\beta_n\big) \\ & + \frac{X_j}{\beta_n}\cdot  \big\{\varphi^{ft}\big\}\big(t/\alpha_n\big) \cdot  \big\{\varphi^{ft}\big\}'\big((tX_j \pm 2\beta_n)/\beta_n\big)\,. \end{align*}
Moreover, this choice also guarantees that $f_{C,\theta}$ integrates to $1$ and, hence, is a probability density. Then the integrals in (\ref{eq:lb.10}) range over a subset of $$[-\alpha_n,\alpha_n] \backslash (-\beta_n/|X_j|,\beta_n/|X_j|)$$ as $H_0^{ft}$ and its (weak) derivative are supported on $[-1,1]$. Also these functions are uniformly bounded by $1$. Thus the integrals vanish whenever $|X_j| < \beta_n / \alpha_n$. It follows that
\begin{align} \nonumber
\chi^2\big(f_{Y_j\mid X_j, \theta=0},f_{Y_j\mid X_j, \theta=1}\big) & \, \leq \, (6 + 8 X_j^2 \alpha_n^2/\beta_n^2) c_L^2 / c_L^*\,,
\end{align}
if $|X_j| \geq \beta_n / \alpha_n$; and $\chi^2(f_{Y_j\mid X_j, \theta=0},f_{Y_j\mid X_j, \theta=1}) = 0$ otherwise. According to standard arguments from decision theory, (\ref{eq:lb.5}) represents a lower bound on the attainable rate if the Hellinger distance between the competing data distributions $f_{X,Y;\theta}^{(n)}$ (for $\theta=0$ and $\theta_1$, respectively) obeys an upper bound which is smaller than $1$ -- uniformly with respect to $n$, see e.g. \cite{T09}. Writing ${\cal H}$ for the Hellinger distance, it holds that 
\begin{align} \nonumber
{\cal H}^2\big(f_{X,Y;\theta=0}^{(n)},f_{X,Y;\theta=1}^{(n)}\big) & \, \leq \, \sum_{j=1}^n {\cal H}^2\big(f_{X_j}f_{Y_j|X_j,0},f_{X_j}f_{Y_j|X_j,1}\big) \\\nonumber
& \, = \,  \sum_{j=1}^n \int f_{X_j}(x) \int \big(f_{Y_j|X_j,0}^{1/2}(y|x) - f_{Y_j|X_j,1}^{1/2}(y|x)\big)^2 \di y \, \di x \\ \label{eq:rev2}
& \, \leq \, \E \, \Big[\sum_{j=1}^n  \chi^2(f_{Y_j\mid X_j, 0},f_{Y_j\mid X_j, 1})\Big] \end{align}
as the distribution of the $X_j$ is identical for $\theta=0$ and $\theta=1$. Then, the term (\ref{eq:rev2}) is bounded from above by
\begin{align} \nonumber 
 \E & \Big[\sum_{j=1}^n 1_{[\beta_n/\alpha_n,\infty)}(|X_j|) \cdot (6 + 8 X_j^2 \alpha_n^2/\beta_n^2) c_L^2\Big] \\ \nonumber 
& \, = \, 6 \, n \, c_L^2 \, \int_{|x|\geq \beta_n/\alpha_n} f_X(x) \di x \, + \, 8 n c_L^2\, \alpha_n^2\, \beta_n^{-2}\, \int_{|x|\geq \beta_n/\alpha_n} x^2 f_X(x) \di x \\ \label{eq:chi2}
& \, = \, {\cal O}\big(n (\beta_n/\alpha_n)^{-\beta-1}\big) \, = \, {\cal O}\big(n \cdot \beta_n^{-(\alpha+2)(\beta+1)}\big)\,, \end{align}
as $\beta>1$. 
We choose $\beta_n \asymp n^{1/[(2+\alpha)(1+\beta)]}$ so that the $\chi^2$-distance between the joint densities of the observations under $\theta=0$ and $\theta=1$ in (\ref{eq:chi2}) is bounded from above as $n$ tends to infinity. By elementary decision theoretic arguments and by (\ref{eq:lb.5}), a lower bound on the attainable convergence rate is given by
$$ \beta_n^{-2\alpha} \asymp n^{-\frac{2\alpha}{(\alpha+2)(\beta+1)}}\,, $$
which completes the proof of the theorem. 
\end{proof}

\begin{proof}[Proof of Theorem \ref{T:3}]
We estimate
\begin{align*}
\E \big[\sup_{a \in K} \big|\hat f_A(a; h, \delta) - f_A(a)  \big|^2 \big]\, \leq &\, 2\, \E \big[ \sup_{a \in K} \big|\hat f_A(a; h, \delta) - \tilde  f_A(a;h)  \big|^2\big]\\
& \,  + 2\, \sup_{a \in K} \big|\tilde f_A(a;h)  - f_A(a)\big|^2,
\end{align*}
where $\tilde f_A(a;h)$ is defined in \eqref{eq:regexpress1}. 
The second term - the regularization bias - is bounded in (\ref{eq:bias}), and that bound is uniform in $a \in K$ from the assumptions on the function class ${\cal F}(K,\alpha,c_A,c_B,r_A,c_M)$.  
For the first term we have, similarly to \eqref{eq:biasvarconddecomp}, that
\begin{align}\label{eq:decompuniform}
\begin{split}
\E \big[ \sup_{a \in K} \big|\hat f_A(a; h, \delta) - \tilde f_A(a; h, \delta) \big|^2\big] & \leq 2\,\E \big[ \sup_{a \in K} \big|\hat f_A(a; h, \delta) - \E \big[\hat f_A(a; h, \delta)| \sigma_Z\big] \big|^2 \big]\\ 
& + 2\, \E \big[\sup_{a \in K} \big|  \E \big[\hat f_A(a; h, \delta)| \sigma_Z\big] - \tilde f_A(a; h) \big|^2\big].
\end{split}
\end{align}
The second term in (\ref{eq:decompuniform}) is bounded by
\begin{align*}
& \sup_{a \in K} I_1(a) \, + \, \sup_{a \in K} I_2(a) \, + \, \sup_{a \in K} I_3(a),   
\end{align*}
where $I_j(a)$ are defined as in (\ref{eq:bias_estimate}), and the dependence on $a$ is stressed in the notation. 
The bounds on the $I_j(a)$ derived after (\ref{eq:bias_estimate}) are uniform in $a$ over a bounded set $K$. Thus, it remains to bound the first term in (\ref{eq:decompuniform}).

Given $\epsilon>0$ let $I_\epsilon$ be a subset of $K$ for which the $\epsilon$-balls with centers at points in $I_\epsilon$ cover $K$. It is possible to choose such a set with a cardinality of order $\text{card}\,( I_\epsilon) \leq C_K\, \epsilon^{-2}$, where $c_K>0$ depends on $K$ but not on $\epsilon$. Then
\begin{align*}
&  \sup_{a \in K} \big|\hat f_A(a; h, \delta) - \E \big[\hat f_A(a; h, \delta)| \sigma_Z\big] \big|^2
\leq  2\, \sup_{a \in I_\epsilon} \big|\hat f_A(a; h, \delta) - \E \big[\hat f_A(a; h, \delta)| \sigma_Z\big] \big|^2\\
&  + \, 2\,\sup_{a \in K} \inf_{a' \in I_\epsilon}\, \big|\hat f_A(a; h, \delta) - E\big[\hat f_A(a; h, \delta)| \sigma_Z\big] - \big(\hat f_A(a'; h, \delta) - E\big[\hat f_A(a'; h, \delta)| \sigma_Z\big]\big) \big|^2.
\end{align*}
Since $ \|\partial_x\, K(x;h)\|_\infty  \leq \,  h^{-3}$, see the formula \eqref{eq:resulkernel} for $K(\cdot;h)$ and the Assumption \ref{assum:kernel} in $w$, by Lipschitz-continuity the second term is $\leq 8\, \epsilon^2 \, h^{-6}$.     
From the Hoeffding inequality, since $\|K(\cdot;h)\|_\infty \leq h^{-2}$ we obtain for $t>0$ that
\begin{align}\label{eq:variancebound1}
\Pb \Big(\big|\hat f_A(a; h, \delta) - \E \big[\hat f_A(a; h, \delta)| \sigma_Z\big] \big| \geq t \Big|\sigma_Z\Big)
\leq \, 2\, \exp\Big(-\frac{t^2}{2 \,  h^{-4}\,  \sumdel \big(Z_{(j+1)} - Z_{(j)} \big)^2\,} \Big).
\end{align}
Set 
\[ r_n = (\log n) \, \cdot \, h^{-4}\, \sumdel \big(Z_{(j+1)} - Z_{(j)} \big)^2.\]
Then, for $\kappa>0$ we estimate
\begin{align*}
&\E\Big[ r_n^{-1}\, \sup_{a \in I_\epsilon} \big|\hat f_A(a; h, \delta) - \E \big[\hat f_A(a; h, \delta)| \sigma_Z\big] \big|^2\, \Big| \sigma_Z\Big] \\
\leq &  \kappa^2 + 2 \, \int_\kappa^\infty\,  t\,\Pb \Big(\sup_{a \in I_\epsilon}\big|\hat f_A(a; h, \delta) - \E \big[\hat f_A(a; h, \delta)| \sigma_Z\big] \big| \geq r_n^{1/2}\, t \Big|\sigma_Z\Big) \di t \\
\leq &  \kappa^2 + 4\, \text{card}\,( I_\epsilon)\, \int_\kappa^\infty\,  t\, \exp\Big(-\frac{t^2\, \log n}{2} \Big)\, \di t \tag{from \eqref{eq:variancebound1} and union bound }\\
\leq & \kappa^2 + 4\, C_K\, \epsilon^{-2}\, \exp\Big(-\frac{\kappa^2\, \log n}{2} \Big)\, (\log n)^{-1}.
\end{align*}
Choose $\epsilon = n^{-2}$ and $\kappa = 10^{1/2}$. Then if $h^{-1} = {\cal O} (n^{1/2})$ we obtain from Lemma \ref{lem:spacings} that
\[ \E\Big[ \sup_{a \in I_\epsilon} \big|\hat f_A(a; h, \delta) - \E \big[\hat f_A(a; h, \delta)| \sigma_Z\big] \big|^2\, \Big] = {\cal O} \big(\E[r_n] \big) = h^{-4}\, \delta^{1-\beta}\, \nicefrac{\log n}{n} , \]
and overall
\begin{align} \nonumber
& \E \big[\sup_{a \in K} \big|\hat f_A(a; h, \delta) - f_A(a)  \big|^2 \big]\, \\ \nonumber & \leq \mbox{const.}\cdot \Big\{ h^{2 \alpha} +  \, h^{-4}\, \delta^{1-\beta}\,  \nicefrac{\log n}{n} + n^{-1}  + \, h^{-4}\, \big(\delta \, + \frac{1}{c_Z\, n \, \delta^\beta}\big)^2 \,+ h^{-4}\, \delta^{-1}\, n^{-2}\, \delta^{1-2 \beta}\,  \\  \nonumber
&   \hspace{3.5cm} +  h^{-6}\, n^{-2}\, \delta^{1-2\beta} + n \, \exp\big( - c_Z\, (n-1)\,(\pi/4)^\beta \big)\Big\}.
\end{align}
Plugging in the choices of $\delta$ and $h$ gives the result. 
\end{proof}

%%%%%%%%%%%%%%%%%%%%%%%%%%%%%%%%%%%%%%%%%%%%%%%%%%%%%%%%%%%%%%%%%%%%%%%%%%%
%%%%%%%%%%%%%%%%%%%%%%%%%%%%%%%%%%%%%%%%%%%%%%%%%%%%%%%%%%%%%%%%%%%%%%%%%%%
%%%%%%%%%%%%%%%%%%%%%%%%%%%%%%%%%%%%%%%%%%%%%%%%%%%%%%%%%%%%%%%%%%%%%%%%%%%
%%%%%%%%%%%%%%%%%%%%%%%%%%%%%%%%%%%%%%%%%%%%%%%%%%%%%%%%%%%%%%%%%%%%%%%%%%%

\subsection{Proofs for Section \ref{sec:adapt}}
%%%%%%%%%%%%%%%%%%%%%%%%%%%%%%%%%%%%%%%%%%%%%%%%%%%%%

\begin{proof}[{\sl Proof of Proposition \ref{P:1}}]
	From (\ref{est:variancecomp}) and (\ref{eq:bias}) we estimate
	\begin{align} 
 	& \E \big[\big|\hat f_A\big(a;\hat h_n, \hat{\delta}_n\big) - f_A(a) \big|^2\, \big|\, \sigma_Z\big] \notag \\
	& \leq   \mbox{const.}\,\cdot\,\big\{\hat h_n^{2\alpha} \, + \, \hat h_n^{-4}\, {\cal C}_{n}(\hat{\delta}_n)\big\} 
	+ \,  \mbox{const.}\,\cdot\, \Big\{\, |a|^2 \,  \hat{h}_n^{-6}\, \cdot  \sum_{j,n,\hat{\delta}_n} \big(Z_{(j+1)} - Z_{(j)} \big)^3 \,\notag\\  
	& \hspace{1.5cm} + \, \ind\big(Z_{(j)}< -\pi/2 + \hat{\delta}_n \text{ or } Z_{(j+1)}> \pi/2 - \hat{\delta}_n \quad \forall \ j=1, \ldots, n-1 \big)\Big\}. \label{eq:choosedeltbound}
	%\leq & \  \mbox{const.}\,\cdot\,  {\cal C}_{n,\alpha}(\hat{\delta}_n) \, + \, 1\big(Z_{(j)}< -\pi/4 \text{ or } Z_{(j+1)}> \pi/4 , \quad j=1, \ldots, n-1 \big). \notag
	\end{align}
	Observe that from the term $\delta^2$ in the definition of $ {\cal C}_{n}(\delta)$, 
	\[ \hat h_n^2 = \big({\cal C}_{n}(\hat{\delta}_n)\big)^{\frac1{ \alpha +2}} \geq \hat{\delta}_n^{\frac{2}{ \alpha +2}} \geq \hat{\delta}_n.\]	
	Since $\hat{\delta}_n \leq \pi/4 \leq 1$, and since ${\cal C}_n(\delta)$ contains the term $\delta^{-1} \sumdel\, (Z_{(j+1)}-Z_{(j)})^3$, from \eqref{eq:choosedeltbound} and the choice of $\hat h_n$ we obtain the bound
	\begin{align} 
	& \E \big[\big|\hat f_A\big(a;\hat h_n, \hat{\delta}_n\big) - f_A(a) \big|^2\, \big|\, \sigma_Z\big]\notag \\
	& \leq    \mbox{const.}\cdot \Big\{ \big[{\cal C}_{n}(\hat{\delta}_n) \big]^{\frac{\alpha}{\alpha+2}}
	+  \ind\big(Z_{(j)}< -\pi/4 \text{ or } Z_{(j+1)}> \pi/4 \quad \forall \  j=1, \ldots, n-1 \big)\Big\}.  \label{eq:helpdelt}
	\end{align}
	By definition of $\hat \delta_n$, 
	\[ {\cal C}_{n}(\hat{\delta}_n) \leq \exp(-n) + \inf_{\delta \in [n^{-1/2},\pi/4]} {\cal C}_{n}(\delta) \leq \exp(-n) + {\cal C}_{n}(\delta_n) \]
	for the deterministic choice $\delta_n = n^{-1/(\beta +1)}$, which is contained in $[n^{-1/2},\pi/4]$ for sufficiently large $n$ since $\beta >1$. 
	Further, by Jensen's inequality, Lemma \ref{lem:spacings} and the choice of $\delta_n$, 
	\[ \E \Big[\,\big({\cal C}_{n}(\delta_n) \big)^{\frac{\alpha}{\alpha+2}}\Big] \leq  \big(\E \big[{\cal C}_{n}(\delta_n) \big]\big)^{\frac{\alpha}{\alpha+2}} = {\cal O}\big(n^{-\frac{ 2\, \alpha}{(\alpha+2)(\beta+1)}}\big).\]
	Substituting these estimates into (\ref{eq:helpdelt}), and using (\ref{eq:probcontains}) finally gives
	\begin{align*}
	 \E \big[\big|\hat f_A\big(a;\hat h_n, \hat{\delta}_n\big) - f_A(a) \big|^2\big]\,  \leq\,&  \,{\cal O}\big(n^{-\frac{ 2\, \alpha}{(\alpha+2)(\beta+1)}}\big) \,+\, \mbox{const.}\, \Big\{ \, \big[\exp(-n)\big]^{\frac{\alpha}{\alpha+2}}  \\
	\,+\,  \Pb\big(Z_{(j)}< -\pi/4 \text{ or } & Z_{(j+1)}> \pi/4 \quad \forall j=1, \ldots, n-1 \big)\Big\}\\
	 = \, &\,   {\cal O}\big(n^{-\frac{2\, \alpha}{(\alpha+2)(\beta+1)}}\big).
	\end{align*}
\end{proof}

\begin{proof}[Proof of Theorem \ref{T:4}]
	Fix $0 < \alpha$ with $2\, \lfloor \alpha \rfloor \leq l$  and $f_A \in {\cal F}(a,c_A,c_B,r_A,\alpha, c_M)$, and set 
	\[ b(k, \alpha) = C_{\text{Bias}}^2(\alpha, w, c_A,c_M)\, h_k^{2 \alpha}, \qquad k \in \cK_n,\]
	see the  bound for the regularization bias in (\ref{eq:bias}). 
	We shall abbreviate $f_A(a) = f$. 

	On the event 
	\[ \{ Z_{(j)}< -\pi/2 + \hat \delta_n \text{ or } Z_{(j+1)}> \pi/2- \hat \delta_n  \quad \forall \ j=1, \ldots, n-1 \}\]
	where $\hat{f}_{\hat k} = 0$, we may estimate
	\[ \E \big[\big|\hat{f}_{\hat k} - f\big|^2 \, \big|\, \sigma_Z\big] \leq \text{const.}\, \cdot \,\ind\big(Z_{(j)}< -\pi/4 \text{ or } Z_{(j+1)}> \pi/4  \quad \forall \ j=1, \ldots, n-1 \big) \]
	since $\hat \delta_n \leq \pi/4$. In the following, suppose that there are two design points $Z_j$ in the interval 
	$[-\pi/2 + \hat \delta_n, \pi/2 - \hat \delta_n]$. Since $h_k \geq \hat{\delta}^{1/2}_n$ for each $k \in \cK_n$, as in the proof of Proposition \ref{P:1} the term involving $h_k^{-6}$ in \eqref{eq:choosedeltbound} is negligible as compared to that with the factor $\hat \delta_n^{-1}\,h_k^{-4}$. Hence using (\ref{est:variancecomp}) and (\ref{eq:bias})  we estimate
	\begin{align} 
	&\, \E \big[\big|\hat f_k - f \big|^2\, \big|\, \sigma_Z\big] 
	\leq \  \mbox{const.}\,\cdot\,\big\{ b(k,\alpha) \, + \, \sigma(k,n)\big\}. \label{eq:firstlep}
	%& \qquad  + \, 1\big(Z_{(j)}< -\pi/4  \text{ or } Z_{(j+1)}> \pi/4, \quad j=1, \ldots, n-1 \big)\Big\}. \notag 
	%\leq & \  \mbox{const.}\,\cdot\,  {\cal C}_{n,\alpha}(\hat{\delta}_n) \, + \, 1\big(Z_{(j)}< -\pi/4 \text{ or } Z_{(j+1)}> \pi/4 , \quad j=1, \ldots, n-1 \big). \notag
	\end{align}
	Define the `oracle index' $k^*$ by
	\[ k^* = k_n^*(\alpha) = \max\big\{k \in \cK_n \mid b(k,\alpha) \leq \CLe \sigma(k,n)/16\big\}.\]
	Note that $b(0,\alpha) = C_{\text{Bias}}^2(\alpha, w, c_A, c_M) \, \hat \delta_n^\alpha \leq \text{const.}$ since $\hat \delta_n^\alpha \leq 1$, while 
	$ \sigma(0,n) = \delta_n^{-2}\,C_n(\hat \delta_n) \,\log n \geq \,\log n$ since ${\cal C}_n(\hat \delta_n) \, \hat \delta_n^{-2} \geq 1$ from the  definition of ${\cal C}_n(\delta)$.
	Further, since by the choice of $K$ we have that $q^K \geq n/q$ we estimate 
	\[ b(K,\alpha) \geq {\cal C}_{\text{Bias}}^2(\alpha, w, c_A, c_M) \, \hat \delta_n^\alpha\,\big(\nicefrac{n}{q}\big)^{2 \alpha} \geq \text{const.}\, n^{3 \alpha/2}\]
	 since $ \hat \delta_n^\alpha \geq n^{-\alpha/2}$ by the choice of $\hat \delta_n$. Finally,   
	\[ \sigma(K,n) \leq \delta_n^{-2}\,\,\big(\nicefrac{q}{n}\big)^{4}\, {\cal C}_n(\hat \delta_n) \,(\log n) \leq \text{const.}\, n^{- 3/2}\, \log(n),\]	
	since ${\cal C}_n(\hat \delta_n) \, \hat \delta_n^{-2} \leq \text{const.}\, \cdot n^{5/2}$ since from the definition of ${\cal C}_n(\delta)$ and since $\hat \delta_n \geq n^{-1/2}$.
	
	Since $b(k, \alpha)$ increase by factors $q^{2 \alpha}$ in $k$, and $\sigma(k,n)$ decrease by factors $q^{-4}$ in $k$, it follows from the above estimates that $k^* \to \infty$ and $K-k^* \to \infty$, and that there are constants $0 < \tilde c_1 < \tilde c_2$ such that $\tilde c_1 \leq \sigma(k^*,n)/b(k^*, \alpha) \leq \tilde c_2$. Rearranging yields 
%	for sufficiently large $n$, 
	%
%	\[ b(0,\alpha) = C_{\text{Bias}}^2(\alpha, w, c_A) \, \hat \delta_n^\alpha\, \leq  \CLe \cdot\, \hat \delta_n^{-2}\,C_n(\hat \delta_n) \,(\log n)\,  /16 = \CLe \cdot\, \sigma(0,n)/16 \]
	%
%	
%	since  and $C_n(\hat \delta_n) \, \hat \delta_n^{-2} \geq 1$, as well as
%	%
%	\begin{align*} b(K,\alpha) \geq C_{\text{Bias}}^2(\alpha, w, c_A) \, \hat \delta_n^\alpha\,\big(\nicefrac{n}{q}\big)^{2 \alpha}\,  >  \CLe \cdot\, \hat \delta_n^{-2}\,\,\big(\nicefrac{q}{n}\big)^{4}\,& C_n(\hat \delta_n) \,(\log n)\,  /16 \\ & \geq \CLe \cdot\, \sigma(K,n)/16 \end{align*}
%	%
%	since $\hat \delta_n^\alpha \geq n^{-\alpha}$ and $C_n(\hat \delta_n) \, \hat \delta_n^{-2} \leq \text{const.}\, \cdot n^3$.
%	

	%
	\begin{equation}\label{eq:oralceindexprop}
	c_1 \, \big({\cal C}_{n}(\hat{\delta}_n)\, \log n\big)^{\frac1{2\,( \alpha +2)}}\leq h_{k^*} \leq c_2\, \big({\cal C}_{n}(\hat{\delta}_n)\, \log n\big)^{\frac1{2\,( \alpha +2)}}
	\end{equation}
	for constants $c_2 > c_1 > 0$. 
	We obtain from (\ref{eq:firstlep}) that
	{\small 
		\begin{align}\label{eq:oraclest}
		\E \big[\big|\hat f_{k^*} - f\big|^2\, \big|\, \sigma_Z\big] \leq \, \mbox{const.}\,\cdot\,  \big[{\cal C}_{n}(\hat{\delta}_n) \, \log n\big]^{\frac{\alpha}{\alpha+2}}.
		\end{align}
	} 
	Now, for $\hat f_{\hat k}$ we estimate
	\begin{align}\label{eq:Lep1}
	\begin{split}
	\E \big[\big|\hat{f}_{\hat k} - f\big|^2 \, \big|\, \sigma_Z\big]\leq & \, 2\, \E \big[\big|\hat{f}_{\hat k} - f\big|^2 \,\ind(\hat k \leq k^*-1) \big|\, \sigma_Z\big]  \\
	+\, & 2\, \E \big[\big|\hat{f}_{\hat k} - f\big|^2 \,\ind(\hat k \geq k^*) \big|\, \sigma_Z\big].
	\end{split}
	\end{align}
	For the second term, we have that 
	\begin{align}\label{eq:boundoverest}
	\begin{split}
	\E \big[\big|\hat{f}_{\hat k} - f\big|^2 \,1(\hat k \geq k^*) \big|\, \sigma_Z\big]& \leq 2\,\E \big[\big|\hat{f}_{\hat k} - \hat f_{k^*}\big|^2 \,\ind(\hat k \geq k^*) \big|\, \sigma_Z\big]\\
	& + 2 \, \E \big[\big|\hat f_{k^*} - f\big|^2 \,\ind(\hat k \geq k^*) \big|\, \sigma_Z\big].
	\end{split}
	\end{align}
	The second term in (\ref{eq:boundoverest}) is bounded by (\ref{eq:oraclest}) after a trivial estimate of the indicator. 
	Further, from the definition of $\hat k$ and (\ref{eq:oralceindexprop}) we have the bound
	\[ \big|\hat{f}_{\hat k} - \hat f_{k^*}\big|^2\, 1_{\hat k \geq k^*} \leq \CLe \, \sigma(k^*,n)  \leq \, \text{const.}\, \cdot  \big[{\cal C}_{n}(\hat{\delta}_n) \, \log n\big]^{\frac{\alpha}{\alpha+2}}, \]
	which also holds in conditional expectation given $\sigma_Z$. 
	
	For the first term in (\ref{eq:Lep1}) we estimate
	\begin{align}
	\E \Big[\big|\hat{f}_{\hat k} - f\big|^2\, \ind(\hat k \leq k^* - 1)\,\big|\, & \sigma_Z\Big]	=  \sum\limits_{k=0}^{k^*-1}  \E \Big[\big|\hat{f}_k - f\big|^2\, \ind(\hat k =k)\,\big|\, \sigma_Z\Big] \notag\\
	&\leq \sum_{k=0}^{k^*-1} \Big( \E \Big[ \big|\hat{f}_k - f\big|^4\, \,\big|\, \sigma_Z\Big] \Big)^{1/2} \,  \Big[\Pb\big(  \hat{k}=k \,\big|\, \sigma_Z \big) \Big]^{1/2}.\label{eq:boundearlystop}
	\end{align}
	Then
	$$  \{\hat{k}=k\} \subseteq \bigcup_{l=0}^k \Big\{ \big| \hat{f}_{k+1} -\hat{f}_{l}   \big|^2 > \CLe \, \sigma(l,n)  \Big\}, \qquad k=0, \ldots, K-1.		$$
	Now let  
	\[
	p_{l,k}=\Pb\big( | \hat{f}_{k} - \hat{f}_l     | > \CLe^{1/2}\, (\sigma(l,n))^{1/2} \,\big|\, \sigma_Z \big),\qquad 0\leq l < k \leq k^*.
	\]
	By choice of $k^*$, for $0\leq l < k \leq k^*$ we have that
	\[ b(l,\alpha) \leq b(k, \alpha) \leq \CLe\,\sigma(k,n)/16 \leq \CLe\,\sigma(l,n)/16. \]
	Hence, setting 
	$ \tilde f_k = \tilde f_A(a; h_k)$ we may estimate
	\begin{align*}
	| \hat{f}_{k} - \hat{f}_l | \, \leq & \, | \hat{f}_{k} - \tilde{f}_{k} | + | \hat{f}_{l} - \tilde{f}_{l} |
	+ | \tilde{f}_{k} - f| + | \tilde{f}_{l} -f|\\
	\leq \, & \, | \hat{f}_{k} - \tilde{f}_{k} | + | \hat{f}_{l} - \tilde{f}_{l} | + b(k, \alpha)^{1/2} + b(l, \alpha)^{1/2}\\
	\leq \, & \, | \hat{f}_{k} - \tilde{f}_{k} | + | \hat{f}_{l} - \tilde{f}_{l} | + \CLe^{1/2} \sigma(l,n)^{1/2}/2.
	\end{align*}
	Therefore, for $0\leq l < k \leq k^*$, 
	\begin{align*}
	p_{l,k} \leq & \Pb\big( | \hat{f}_{k} - \tilde{f}_{k} |  > \CLe^{1/2}  \sigma(l,n)^{1/2}/4 \,\big|\, \sigma_Z \big) \\ & + \Pb_{f_A}\big( | \hat{f}_{l} - \tilde{f}_{l} | > \CLe^{1/2} \sigma(l,n)^{1/2}/4 \,\big|\, \sigma_Z \big). \end{align*}
	Since $\sigma(l,n) > \sigma(k,n)$, $l < k$, it suffices to bound 
	\[ \Pb\big( | \hat{f}_{l} - \tilde{f}_{l} | > \CLe^{1/2} \sigma(l,n)^{1/2}/4 \,\big|\, \sigma_Z \big), \qquad 0 \leq l \leq k^*.\] 
	By choice of the grid $\cK_n$, $h_l^2 \geq h_0^2 = \hat \delta_n$, therefore 
	\[ \big| \E\big[ \hat{f}_{l}   \,\big|\, \sigma_Z \big] - \tilde{f}_{l} \big| \leq \mbox{const.}\,\cdot\, \big[h_l^{-4}\, C_n\big(\hat{\delta}_n\big) \big]^{1/2} \leq \, \sigma(l,n)^{1/2}  \]
	%
	%where the constant does not depend on $\alpha$. 
	%
	for $n$ sufficiently large. Hence
	\[ \Pb\big( | \hat{f}_{l} - \tilde{f}_{l} | > \CLe^{1/2} \sigma(l,n)^{1/2}/4 \,\big|\, \sigma_Z \big) \leq \Pb\big( | \hat{f}_{l} - \E\big[ \hat{f}_{l}   \,\big|\, \sigma_Z \big] | > \tilde C\, \sigma(l,n)^{1/2} \,\big|\, \sigma_Z \big),\]
	where $\tilde C = \big(\CLe^{1/2}/4 \, - 1\big)$. Using the bound $\|K(\cdot;h)\|_\infty \leq \, h^{-2}$,  
	%for a constant $c_w>0$ depending on the weight function $w$, 
	 see the formula \eqref{eq:resulkernel} for $K(\cdot;h)$ and the Assumption \ref{assum:kernel} in $w$, we use the conditional Hoeffding inequality in order to estimate 
	\begin{align*}
	\Pb\big( | \hat{f}_{l} - \E\big[ \hat{f}_{l}   \,\big|\, \sigma_Z \big] | >  \tilde C\, & \sigma(l,n)^{1/2} \,\big|\, \sigma_Z \big) \\ & \leq 2\, \exp\Big(-\frac{\tilde C^2\, \sigma(l,n)}{2\, h_l^{-4}\, \sum_{j,n,\hat{\delta}_n} \big(Z_{(j+1)} - Z_{(j)} \big)^2} \Big)\\
	& \leq 2 \exp(- \bar C\, \log n),
	\end{align*}
	see (\ref{eq:variancebound}), where
	\[ \bar C = \tilde C^2/2 = \big(\CLe^{1/2}/4 \, - 1\big)^2/2 = 8\]
	for the choice $\CLe = 20^2$. Note that in this step, the logarithmic factor is essential.

	Hence
	\[ \Pb\big(\hat{k}=k\,\big|\, \sigma_Z ) \leq 2\, K\,n^{- 8}, \qquad k=0, \ldots, k^*,\]
	and in (\ref{eq:boundearlystop}) we obtain the bound
	\begin{align}
	\E \Big[\big|\hat{f}_{\hat k} - f\big|^2\, \ind(\hat k \leq k^* - 1)\,\big|\, \sigma_Z\Big]	&\leq\, 2\, K^{1/2}\, n^{-8/2}\,  \sum_{k=0}^{k^*-1} \Big( \E \big[ \big|\hat{f}_k - f\big|^4\, \,\big|\, \sigma_Z\big] \Big)^{1/2} \,  .
	\end{align}
	The crude bound
	\begin{align*}
	\E \big[ \big|\hat{f}_k - f\big|^4\, \,\big|\, \sigma_Z\big] & \leq \E \big[ \big|\hat{f}_k\big|^4\, \,\big|\, \sigma_Z\big] + \text{const.} \, \leq \, \text{const.}\, \cdot h_k^{-8} \\
	& \leq \, \text{const.}\, \cdot \hat \delta_n^{-4} \leq \text{const.}\, \cdot n^{2},\qquad k \in \cK_n,
	\end{align*}
	now suffices to conclude that for sufficiently large choice of the constant $\CLe$, 
	\begin{align*}
	\E \big[\big|\hat{f}_{\hat k} & - f\big|^2 \, \big|\, \sigma_Z\big]  \leq \, \mathcal{O} \Big(\big[{\cal C}_{n}(\hat{\delta}_n) \, \log n\big]^{\frac{\alpha}{\alpha+2}} \Big) +  \mathcal{O}(n^{-1})\\
	& \qquad + \text{const.}\, \cdot \,\ind\big(Z_{(j)}< -\pi/4 \text{ or } Z_{(j+1)}> \pi/4 , \quad j=1, \ldots, n-1 \big).
	\end{align*}
	The remainder of the proof is as that of Proposition \ref{P:1}.
\end{proof}

\subsection{Spacings} \label{spacings}
%%%%%%%%%%%%%%%%%%%%%%%%%%%%%%%%%%%%%%%%

%In order to study the asymptotic behaviour of the spacings and $L_n(\delta)$ and ${\color{red}{R_n}}(\delta)$ we impose the following condition on the tail behaviour of the density $f_X$ of $X_j$.

As $Z_j = \arctan X_j$ the density of $Z_j$ equals
$$ f_Z(z) = f_X(\tan(z)) / \cos^2 z\,, \qquad \forall z \in (-\pi/2,\pi/2)\,, $$
so that (\ref{eq:decayX}) implies
\begin{equation} \label{eq:decayZ}
C_Z \big||z|-\pi/2\big|^\beta \geq f_Z(z) \geq c_Z  \big||z|-\pi/2\big|^\beta \,, \qquad \forall z \in (-\pi/2,\pi/2)\,,
\end{equation}
for some constants $C_Z, c_Z > 0$.  

\begin{lem}\label{lem:spacings}
	If $f_X$ satisfies (\ref{eq:decayX}) and hence $f_Z$ fulfills (\ref{eq:decayZ}), then for $\kappa >1$ we have that%
	\begin{align} \nonumber
	\E \Big[\sum_{j=1}^{n-1} \big(Z_{(j+1)} - Z_{(j)}\big)^\kappa \cdot \, & \ind(\delta - \pi/2 \leq Z_{(j)}, Z_{(j+1)} \leq \pi/2 - \delta)\Big]  \\ \label{eq:boundsumspacings} & \leq 
	2 \kappa \, C_Z\,  c_Z^{-\kappa} \Gamma(\kappa) \, n (n-1)^{-\kappa} \int_\delta^{\pi/2} u^{-\beta(\kappa-1)} \di u.
	\end{align}
	Furthermore, 
	\begin{align}
	\max\Big(\E& \big[ \big(L_n(\delta)+ \pi/2\big)^2\big], \E \big[\big(R_n(\delta) - \pi/2\big)^2\big]\Big) \notag \\ & \leq 2\big(\delta + \frac{1}{c_Z\, n \, \delta^\beta}\big)^2 \, + \, \pi^2 \cdot \exp\big(- c_Z\, n (\pi/2-\delta) \delta^\beta\big) \notag \\
	& \leq 32\, \big(\delta + \frac{1}{c_Z\, n \, \delta^\beta}\big)^2, \qquad \delta \leq \pi/4,
	\end{align}	
	and for $\delta \leq \pi/4$ that
	\begin{align} \nonumber 
	 \Pb\big(Z_{(j)}< -\pi/2 + \delta \text{ or } Z_{(j+1)}> \pi/2 - \delta& , \quad j=1, \ldots, n-1 \big) \\ \label{eq:probcontains} & \leq n \, \exp\big( - c_Z\, (n-1)\,(\pi/4)^\beta \big).
	\end{align} 
 \end{lem}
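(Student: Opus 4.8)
The plan is to establish the three assertions separately; the moment bound (\ref{eq:boundsumspacings}) for the spacings is the substantial part, while the bounds on $L_n(\delta),R_n(\delta)$ and (\ref{eq:probcontains}) reduce to elementary extreme-value estimates. For (\ref{eq:boundsumspacings}) I would first remove the order statistics by exchangeability. Writing $I:=[\delta-\pi/2,\pi/2-\delta]$, the sum on the left counts, over the pairs of sample points that are neighbours in the (a.s.\ strict) sorted order and both lie in $I$, the $\kappa$-th power of their distance; since all $n(n-1)$ ordered pairs are equidistributed, taking expectations and then conditioning on $(Z_1,Z_2)$ — each of the other $n-2$ points avoiding $(Z_1,Z_2)$ with conditional probability $1-[F_Z(Z_2)-F_Z(Z_1)]$, where $F_Z$ is the distribution function of $Z_j$ — yields
\[
\E\Big[\sum_{j=1}^{n-1}(Z_{(j+1)}-Z_{(j)})^\kappa\,\ind(Z_{(j)},Z_{(j+1)}\in I)\Big]
= n(n-1)\iint_{z_1<z_2,\ z_1,z_2\in I}(z_2-z_1)^\kappa\big(1-[F_Z(z_2)-F_Z(z_1)]\big)^{n-2}f_Z(z_1)f_Z(z_2)\,\di z_1\,\di z_2 .
\]

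Next I would pass to the probability-integral-transform coordinates $u_i=F_Z(z_i)$ (note $F_Z$ is strictly increasing and, on $I$, bi-Lipschitz by (\ref{eq:decayZ})), so that $\di u_i=f_Z(z_i)\,\di z_i$ and $z_2-z_1=\int_{u_1}^{u_2}g(w)\,\di w$ with $g:=1/(f_Z\circ F_Z^{-1})$; on the domain of integration $F_Z^{-1}(w)\in I$ for all $w\in[u_1,u_2]$, so (\ref{eq:decayZ}) bounds $g(w)$ from above. Applying the power-mean inequality $\big(\int_{u_1}^{u_2}g\big)^\kappa\le(u_2-u_1)^{\kappa-1}\int_{u_1}^{u_2}g^\kappa$ (valid since $\kappa>1$) and interchanging the order of integration over $u_1<w<u_2$, the $u$-integral factors off and, after the substitutions $a=w-u_1$, $b=u_2-w$ and then $s=a+b$, evaluates to at most $\int_0^1 s^\kappa(1-s)^{n-2}\,\di s=B(\kappa+1,n-1)$. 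Since $n(n-1)\,\Gamma(n-1)=\Gamma(n+1)$ and $\Gamma(n+1)/\Gamma(n+\kappa)\le n^{1-\kappa}\le n(n-1)^{-\kappa}$ (log-convexity of $\Gamma$), it remains only to change variables back, bound $f_Z$ from above via (\ref{eq:decayZ}), and use the symmetry $z\mapsto|z|$ together with $u=\pi/2-|z|$, which produces exactly the right-hand side of (\ref{eq:boundsumspacings}). The main obstacle is organising this chain so that the lower bound of (\ref{eq:decayZ}) enters through $g^\kappa$ — producing the factor $c_Z^{-\kappa}$ — and the upper bound through the final change of variables — producing $C_Z$; the power-mean step is precisely what prevents the largest spacing, which is of order $n^{-1}\log n$, from entering and spoiling the rate.

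For the bounds on $L_n(\delta)$ and $R_n(\delta)$, by the reflection symmetry $z\mapsto-z$ of (\ref{eq:decayZ}) it suffices to bound $\E[(\pi/2-R_n(\delta))^2]$. On the event where $R_n(\delta)$ equals $\pi/2$ by convention the contribution is $0$; otherwise, for $t\ge\delta$ one has $\{\pi/2-R_n(\delta)>t\}\subseteq\{\text{no }Z_j\in[\pi/2-t,\pi/2-\delta]\}$, whose probability is at most $\exp(-nq(t))$ with $q(t)=\int_{\pi/2-t}^{\pi/2-\delta}f_Z\ge c_Z\delta^\beta\min(t-\delta,\pi/2-\delta)$ by (\ref{eq:decayZ}). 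Writing $\E[(\pi/2-R_n(\delta))^2]=\int_0^\infty 2t\,\Pb(\pi/2-R_n(\delta)>t)\,\di t$ and splitting the range at $t_*:=\delta+(c_Z n\delta^\beta)^{-1}$ — the probability bounded by $1$ on $[0,t_*]$, by the exponential on $[t_*,\pi/2]$, and by using $\pi/2-R_n(\delta)\le\pi$ for $t>\pi/2$ — gives a bound of the form $O(t_*^2)+\pi^2\exp\!\big(-c_Z n(\pi/2-\delta)\delta^\beta\big)$; for $\delta\le\pi/4$ the elementary inequality $e^{-x}\le 2/x^2$ and $\pi/2-\delta\ge\pi/4$ absorb the exponential term into a constant multiple of $t_*^2$, which is the asserted form.

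Finally, the event in (\ref{eq:probcontains}) says exactly that no two order statistics consecutive in index both lie in $J:=[-\pi/2+\delta,\pi/2-\delta]$, and since $J$ is an interval this is equivalent to $\#\{j:Z_j\in J\}\le1$. With $p:=\Pb(Z_1\in J)$ its probability equals $(1-p)^n+np(1-p)^{n-1}=(1+(n-1)p)(1-p)^{n-1}\le n\,e^{-(n-1)p}$, and for $\delta\le\pi/4$ one has $[-\pi/4,\pi/4]\subseteq J$, where $f_Z\ge c_Z(\pi/4)^\beta$ by (\ref{eq:decayZ}), so $p\ge c_Z(\pi/4)^\beta$, which gives (\ref{eq:probcontains}).
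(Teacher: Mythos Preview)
Your argument is correct. For the bounds on $L_n(\delta),R_n(\delta)$ and for (\ref{eq:probcontains}) you follow essentially the paper's route: a tail-integral representation of the second moment together with the exponential bound $\Pb(\text{no }Z_j\in[a,b])\le\exp\big(-n\int_a^bf_Z\big)$, respectively a direct binomial tail estimate. The only cosmetic difference is that the paper carries out the computation for $L_n$ while you do it for $R_n$.

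For (\ref{eq:boundsumspacings}) you take a genuinely different path. The paper fixes a single sample point $Z_1$, writes the $\kappa$-th moment of the gap to its right neighbour via the tail formula $\E[W^\kappa]=\int_0^\infty\Pb(W>t^{1/\kappa}\mid Z_1)\,\di t$, then bounds $(1-\int_z^{z+s}f_Z)^{n-1}$ by $\exp\big(-(n-1)c_Z\,||z|-\pi/2|^\beta s\big)$ and evaluates the resulting Gamma integral. You instead condition on an ordered pair being adjacent, pass to uniform coordinates $u=F_Z(z)$, and use Jensen's (power-mean) inequality to decouple the $(u_2-u_1)$-dependence from $g^\kappa$; the first factor yields a clean Beta integral $B(\kappa+1,n-1)$ and the second yields $\int_\delta^{\pi/2}u^{-\beta(\kappa-1)}\,\di u$. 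What this buys: your argument only ever invokes the pointwise two-sided bounds on $f_Z$ at a single location, whereas the paper's exponential step effectively requires a uniform lower bound on $f_Z$ over the whole interval $[z,z+s]$, a step that needs some care on the half $z>0$ where $f_Z$ decreases towards $\pi/2$. A minor remark: bounding $\int_J g^\kappa\,\di w=\int_I f_Z^{1-\kappa}\,\di z$ directly with the lower bound of (\ref{eq:decayZ}) already gives the constant $c_Z^{1-\kappa}\le C_Zc_Z^{-\kappa}$, so the separate use of the upper bound you describe is not actually needed; and your constant-tracking for the $L_n,R_n$ bound yields a slightly larger absolute constant than $32$ unless one uses the sharper $e^{-x}\le 4e^{-2}/x^2$, but this is immaterial for the applications in the paper.
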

\begin{proof}[Proof of Lemma \ref{lem:spacings}]
Setting
$$ Z_j^* := \begin{cases} Z_j\,, & \mbox{ if }Z_j \geq Z_k\,, \, \forall k=1,\ldots,n\,, \\
\min\{Z_k : Z_k > Z_j\}\,, & \mbox{ otherwise,} \end{cases} $$  
we deduce under (\ref{eq:decayZ}) that
\allowdisplaybreaks
\begin{align*} & \, \E \Big[ \sumdel \big(Z_{(j+1)} - Z_{(j)}\big)^\kappa\Big]\\
 = & \, \E \, \Big[\sum_{j=1}^n  \big(Z_j^* - Z_j\big)^\kappa\, \ind(\delta - \pi/2 \leq Z_{j}, Z_{j}^* \leq \pi/2 - \delta)\Big] \\
 \leq & \, n\, \E \, \Big[\E \, \big[ \big(Z_1^* - Z_1\big)^\kappa \mid Z_1\big]\, \ind(\delta - \pi/2 \leq Z_{1} \leq \pi/2 - \delta)\Big] \\
 = & \, n\,  \E \Big[\, \int_0^{(\pi/2 - Z_1)^\kappa} \Pb\big(Z_1^* > Z_1 + t^{1/\kappa} \mid Z_1\big) \di t\, \, \ind(\delta - \pi/2 \leq Z_{1} \leq \pi/2 - \delta)\Big] \\
 \leq & \, n\,  \E \,\Big[ \, \int\limits_0^{(\pi/2 - Z_1)^\kappa} \Pb\big(Z_k \not\in (Z_1,Z_1 + t^{1/\kappa}), \, \forall k\neq 1 \mid Z_1\big)  \di t \, \ind(\delta - \pi/2 \leq Z_{1} \leq \pi/2 - \delta)\Big] \\
= & \, n \, \int_{\delta-\pi/2}^{\pi/2-\delta} \, \int_0^{(\pi/2 - z)^\kappa}\,  \Big(1 - \int_{z}^{z+ t^{1/\kappa}} f_Z(x) \di x\Big)^{n-1} \, \di t\, f_Z(z) \di z \\
= & \, n \, \int_{\delta-\pi/2}^{\pi/2-\delta} \, \int_0^{\pi/2 - z}\,  \Big(1 - \int_{z}^{z+s} f_Z(x) \di x\Big)^{n-1} \kappa s^{\kappa-1} \di s\,  f_Z(z) \di z \\
 \leq & \, C_Z \, n \int_{\delta-\pi/2}^{\pi/2-\delta} \, \int_0^{\infty} \exp\big( - (n-1) c_Z ||z|-\pi/2|^\beta\, s \big) \, \kappa s^{\kappa-1} \di s \, ||z| - \pi/2|^\beta \di z \\
 = & \, \kappa \, C_Z\, c_z^{-\kappa}\, n\, (n-1)^{-\kappa}\, \int_{\delta-\pi/2}^{\pi/2-\delta} \, ||z| - \pi/2|^{-\beta(\kappa-1)} \di z\,  \int_0^{\infty} \exp( -  s) s^{\kappa-1} \di s  \\
 = & \, 2 \kappa \, C_Z\, c_Z^{-\kappa} \Gamma(\kappa) \, n (n-1)^{-\kappa}\, \int_\delta^{\pi/2} u^{-\beta(\kappa-1)} \di u\,,
\end{align*}

that is, (\ref{eq:boundsumspacings}). 
Moreover we write $Z_j^* := Z_j + \pi/2$ and $L_n^*(\delta) := L_n(\delta)+\pi/2$ so that
\allowdisplaybreaks
\begin{align*}
& \E \big[L_n^*(\delta)^2\big]  = 2\, \int_0^\pi \,z\, \Pb( L_n^*(\delta) > z)\, \di z \\
& \leq  2 \int_0^\delta \, z \, \di z + 2\, \int_\delta^\pi \, z \, \Pb\big(\min\,\big\{ Z_j^* : Z_j^* \geq  \delta\big\} \geq z \big) \, \di z\\
&  =\,  \delta^2 + 2\, \int_\delta^\pi \, z \, \Big( 1 - \int_\delta^z\, f_Z(x-\pi/2)\, \di x\Big)^n \, \di z \\ 
&  \leq \, \delta^2 + 2\,\int_\delta^\pi \, z \, \exp\Big( - n\,\int_\delta^z\, f_Z(x-\pi/2)\, \di x\Big) \, \di z\\
&  \leq \, \delta^2 + 2\,\int_\delta^{\pi/2} \, z \, \exp\big( - n \, c_Z (z - \delta)\, \delta^\beta\big) \, \di z \, + \, \pi^2 \cdot \exp\big(- n c_Z (\pi/2-\delta) \delta^\beta\big) \\ 
&   =\, \delta^2 + 2\,\int_0^{\pi/2 -\delta} \, (z+\delta) \, \exp\,\big( - n\,z \, \delta^\beta\big) \, \di z  \, + \, \pi^2 \cdot \exp\big(- n  c_Z (\pi/2-\delta) \delta^\beta\big)\\
& \leq \, \delta^2 + 2\, \frac{\delta}{c_Z\, n \, \delta^\beta}+ 2\, \frac{1}{\big(c_Z\, n \, \delta^\beta\big)^2} \, + \, \pi^2 \cdot \exp\big(- n \, c_Z\,(\pi/2-\delta) \delta^\beta\big)\\
& \leq \, 2\big(\delta + \frac{1}{c_Z\, n \, \delta^\beta}\big)^2 \, + \, \pi^2 \cdot \exp\big(- n\, c_Z\, (\pi/2-\delta) \delta^\beta\big)\,,
\end{align*}
as $\delta \downarrow 0$. The term $\E\big[\big(R_n(\delta)-\pi/2)^2\big]$ can be bounded analogously. 

Concerning (\ref{eq:probcontains}), we bound the probability that there is at most one observation in $[-\pi/2+\delta, \pi/2 - \delta]$ for $\delta \leq \pi/4$ by
\begin{align*}
&  \Pb\big(Z_{(j)}< -\pi/4 \text{ or } Z_{(j+1)}> \pi/4, \quad j=1, \ldots, n-1 \big) \\
& \, \leq \,  \, n \, \Pb\big( Z_j \in [-\pi/2, -\pi/4) \cup (\pi/4 ,\pi/2], \ j=2, \ldots, n\big)\\
& \, \leq \, n \, \Big( 1 - \int_{- \pi/4 }^{\pi/4 }\, f_Z(z)\, \di z\Big)^{n-1}\\
& \, \leq \, n \, \exp\big( - c_Z\, (n-1)\, (\pi/2)\,(\pi/4)^\beta \big),\\
\end{align*}
which implies the result. 
\end{proof}

\section*{Acknowledgements}
The authors are grateful to the editors and a referee for their thorough review and very helpful and constructive comments. H. Holzmann gratefully acknowledges financial support of the DFG, grant Ho 3260/5-1.

\end{document}